\documentclass[11pt]{amsart}

\usepackage[a4paper,margin=30mm]{geometry}
\usepackage{mathtools,amssymb,amsthm}
\usepackage{enumitem}
\usepackage{microtype}
\usepackage[colorlinks=true,linkcolor=blue,citecolor=blue,urlcolor=blue]{hyperref}
\usepackage[nameinlink,capitalise]{cleveref}

\usepackage{color}

\newtheorem{theorem}{Theorem}[section]
\newtheorem{proposition}[theorem]{Proposition}
\newtheorem{lemma}[theorem]{Lemma}
\newtheorem{corollary}[theorem]{Corollary}

\newtheorem{remark}[theorem]{Remark}

\AddToHook{env/proposition/begin}{\crefalias{theorem}{proposition}}
\AddToHook{env/lemma/begin}{\crefalias{theorem}{lemma}}
\AddToHook{env/corollary/begin}{\crefalias{theorem}{corollary}}
\AddToHook{env/definition/begin}{\crefalias{theorem}{definition}}
\AddToHook{env/remark/begin}{\crefalias{theorem}{remark}}

\numberwithin{equation}{section}

\newcommand{\ii}{\sqrt{-1}}
\newcommand{\Id}{\operatorname{Id}}
\newcommand{\rk}{\operatorname{rank}}
\newcommand{\tr}{\operatorname{tr}}

\newcommand{\Gr}{\operatorname{Gr}}
\newcommand{\HN}{\mathrm{HN}}
\newcommand{\HNS}{\mathrm{HNS}}

\newcommand{\End}{\operatorname{End}}
\newcommand{\codim}{\operatorname{codim}}
\newcommand{\supp}{\operatorname{Supp}}
\newcommand{\ddbar}{\partial\bar\partial}
\newcommand{\norm}[1]{\lVert #1\rVert}

\title[The sharp Bogomolov inequality]
{The Sharp Generalized Bogomolov Inequality}

\author[Chuanjing Zhang]{Chuanjing Zhang}
\address{School of Mathematics and Statistics, Ningbo University, Ningbo, 315211, P.R. China} \email{zhangchuanjing@nbu.edu.cn}
\author{Xi Zhang}
\address{School of Mathematics and Statistics, Nanjing University of Science and Technology, Nanjing 210094, P. R. China}
\email{mathzx@njust.edu.cn}

\subjclass[2020]{32L05, 53C07, 53C55}
\keywords{Bogomolov inequality, Gauduchon metric, astheno-K\"ahler metric, Harder-Narasimhan filtration, Hermitian-Yang-Mills flow, projectively flat bundle}

\begin{document}

\begin{abstract}
Let $(X,\omega)$ be a compact Gauduchon and astheno-K\"ahler manifold. Using $L^p$-approximate critical Hermitian structures, we establish a sharp generalized Bogomolov inequality for holomorphic vector bundles on $X$, without any semistability assumption. We further characterize the equality case by studying the Hermitian-Yang-Mills flow. If equality holds, every Harder-Narasimhan-Seshadri filtration is by holomorphic subbundles, and its stable quotients admit projectively flat Hermitian metrics. If, in addition, the bundle is not $\omega$-semistable, then $X$ is projective.
\end{abstract}

\thanks{The  authors are partially supported by NSF in China No.12431004, 12622103, 12371062 and 12141104. The research was supported by the Fundamental Research Funds for the Central Universities No.30926WZCT202601009 and the Stability Support Program for Basic Research No.WZJC202601001.}
\maketitle

\section{Introduction}

Let $(X,\omega)$ be a compact Hermitian manifold of complex dimension $n$. The metric $\omega$ is called Gauduchon if $\ddbar\omega^{n-1}=0$, and  astheno-K\"ahler if $\ddbar\omega^{n-2}=0$. We use the normalizations
\begin{equation}
 dV_\omega=\frac{\omega^n}{n!},
 \qquad
 V_\omega=\int_X dV_\omega.
\end{equation}
For a torsion-free coherent sheaf $\mathcal F$, let $c_j^{\mathrm{BC}}(\mathcal F)\in H_{\mathrm{BC}}^{j,j}(X,\mathbb R)$, $j=1,2$, denote its Chern classes in real Bott-Chern cohomology (\cite{BSW2023}).
If $\omega $ is a Gauduchon metric, define
\begin{equation}
 \deg_\omega(\mathcal F)
 =\int_Xc_1^{\mathrm{BC}}(\mathcal F)\wedge\frac{\omega^{n-1}}{(n-1)!},
 \qquad
 \mu_\omega(\mathcal F)=\frac{\deg_\omega(\mathcal F)}{\rk\mathcal F}.
\end{equation}
Since $\ddbar\omega^{n-1}=0$, the degree is independent of the Hermitian metric used to represent $c_1^{\mathrm{BC}}(\mathcal F)$.

Let $E$ be a holomorphic vector bundle of rank $r$, and let
\begin{equation}\label{HN-filtration}
 0=\mathcal{E}_0\subset \mathcal{E}_1\subset\cdots\subset \mathcal{E}_l=E
\end{equation}
be its Harder-Narasimhan filtration by saturated subsheaves with respect to the Gauduchon metric $\omega$ (\cite{Bru2001}).  Write
\begin{equation}
 \mathcal{Q}_\alpha=\mathcal{E}_\alpha/\mathcal{E}_{\alpha-1},\qquad
 r_\alpha=\rk \mathcal{Q}_\alpha,\qquad
 \mu_\alpha=\mu_\omega(\mathcal{Q}_\alpha),
\end{equation}
so that $\mu_1>\cdots>\mu_l$. For convenience, set
\begin{equation}\label{HN variance}
 \mathcal{I}_\omega(E):=\sum_{\alpha=1}^{l}
 r_\alpha\bigl(\mu_\alpha-\mu_\omega(E)\bigr)^2.
\end{equation}
This number is determined by the Harder-Narasimhan type of $E$ with respect to $\omega$, and it vanishes if and only if $E$ is $\omega$-semistable. Refine \eqref{HN-filtration} by choosing a Seshadri filtration of each semistable quotient $\mathcal{Q}_\alpha$ by saturated subsheaves. Its stable torsion-free quotients will be denoted by $\mathcal{Q}_{\alpha,\beta}$, and
\begin{equation}\label{HNS graded sheaf}
 \Gr^{\HNS}_\omega(E)=
 \bigoplus_{\alpha,\beta}\mathcal{Q}_{\alpha,\beta}
\end{equation}
is the associated Harder-Narasimhan-Seshadri graded sheaf.

When $\omega $ is astheno-K\"ahler, for any torsion-free coherent sheaf $\mathcal F$ of positive rank $s$,
we set
\begin{equation}\label{discriminant}
 \Delta_\omega(\mathcal F)
 =\int_X\left(2c_2^{\mathrm{BC}}(\mathcal F)
       -\frac{s-1}{s}c_1^{\mathrm{BC}}(\mathcal F)^2\right)
 \wedge\frac{\omega^{n-2}}{(n-2)!}.
\end{equation}
The integral is computed using smooth $d$-closed representatives of the  Bott-Chern classes and the astheno-K\"ahler condition makes it independent of these choices. If $\mathcal F$ is locally free, these classes are represented
by the Chern forms of any smooth Hermitian metric on $\mathcal F$.

The classical Bogomolov inequality (\cite{Bog1979,Lan2015,LZ2015,LZZ2017,Sim1988}) is one of the fundamental links between the semistability of holomorphic vector bundles and the nonnegativity of their Chern numbers. Nie and the second author (\cite{NZ2018}) proved  that the $\omega$-semistable holomorphic  bundle $E$ over the compact Gauduchon manifold $(X, \omega)$ admits an approximate Hermitian-Einstein structure. If $\omega$ is also astheno-K\"ahler, the Chern-Weil formula gives the Bogomolov inequality
\begin{equation}
\Delta_\omega(E) \geq 0.
\end{equation}
Without semistability, $\Delta_\omega(E)$ may be negative. In \cite{LZZ2025}, the authors proved the existence of $L^{p}$-approximate critical Hermitian structures by Uhlenbeck-Yau's continuity method (\cite{UY1986}). Using these structures, we prove the following sharp generalized Bogomolov inequality.
\begin{theorem}\label{thm1}
Let $(X,\omega)$ be a compact Gauduchon and astheno-K\"ahler manifold of complex dimension $n$,
and let $E$ be a holomorphic vector bundle. Then
\begin{equation}\label{sharp-Bogomolov}
 \Delta_\omega(E)
 \geq
 -\frac{n-1}{nV_\omega} \mathcal{I}_\omega(E).
\end{equation}
\end{theorem}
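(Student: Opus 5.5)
We would prove \eqref{sharp-Bogomolov} through a Chern--Weil computation with the $L^p$-approximate critical Hermitian structures of \cite{LZZ2025}. For a Hermitian metric $H$ on $E$ with Chern curvature $F_H$, set $\Lambda_\omega F_H$ and write $F_H^\perp=F_H-\frac{1}{r}\tr F_H\,\Id_E$ for the trace-free part. The pointwise identity in the Kähler-type setting reads
\begin{equation*}
\Bigl(2c_2(E,H)-\frac{r-1}{r}c_1(E,H)^2\Bigr)\wedge\frac{\omega^{n-2}}{(n-2)!}
=\frac{1}{4\pi^2}\Bigl(|F_H^\perp|^2-|\Lambda_\omega F_H^\perp|^2\Bigr)\,dV_\omega .
\end{equation*}
This is a purely algebraic statement about $(1,1)$-forms and holds on any Hermitian manifold. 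The astheno-Kähler condition guarantees that integrating the left side gives $\Delta_\omega(E)$ for every $H$. We also use the standard inequality $|F^\perp|^2\ge \frac1n|\Lambda_\omega F^\perp|^2$, valid for any endomorphism-valued $(1,1)$-form. Combining these gives
\begin{equation*}
\Delta_\omega(E)\ \ge\ -\frac{n-1}{4\pi^2 n}\int_X|\Lambda_\omega F_H^\perp|^2\,dV_\omega ,
\end{equation*}
for every smooth $H$, with constants to be matched to the normalization of $\deg_\omega$.

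Next we invoke \cite{LZZ2025}. For every $\delta>0$ and $p\ge1$ (we only need $p=2$) there is a smooth metric $H_\delta$ such that
\begin{equation*}
\Bigl\|\tfrac{\ii}{2\pi}\Lambda_\omega F_{H_\delta}-\Psi^{\HN}_\omega\Bigr\|_{L^2}<\delta .
\end{equation*}
Here $\Psi^{\HN}_\omega$ is the $L^2$ endomorphism, defined almost everywhere, that is built from the orthogonal projections onto the Harder–Narasimhan filtration \eqref{HN-filtration}. It has constant eigenvalue $\mu_\alpha$ on the $\alpha$-th graded piece, after dividing by $V_\omega$ according to the convention of the paper. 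The trace-free part of $\Psi^{\HN}_\omega$ then has eigenvalues $\mu_\alpha-\mu_\omega(E)$ with multiplicities $r_\alpha$, divided by $V_\omega$. Its pointwise squared norm is therefore $\mathcal{I}_\omega(E)/V_\omega^2$, and its $L^2$ norm squared is $\mathcal I_\omega(E)/V_\omega$.

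We substitute $H_\delta$ into the inequality above and let $\delta\to0$. This is legitimate because the trace-free part of $\frac{\ii}{2\pi}\Lambda_\omega F_{H_\delta}$ converges in $L^2$ to the trace-free part of $\Psi^{\HN}_\omega$. The result is
\begin{equation*}
\Delta_\omega(E)\ \ge\ -\frac{n-1}{n}\cdot\frac{\mathcal I_\omega(E)}{V_\omega},
\end{equation*}
which is exactly \eqref{sharp-Bogomolov}.

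The main obstacle is bookkeeping of normalizations. The factors $(n-1)!$ and $(n-2)!$, the factor $2\pi$ in $\frac{\ii}{2\pi}F$, and the convention that $\Lambda_\omega$ is the contraction dual to $\omega^{n-1}/(n-1)!$ must together produce the sharp constant $\frac{n-1}{n}$. We also need $\deg_\omega$ to equal $\int\tr\frac{\ii}{2\pi}\Lambda F\,dV_\omega$. We would fix these by checking the formulas on a line bundle and on a direct sum of two line bundles.

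A second subtle point is whether the Harder–Narasimhan and Seshadri projections in \cite{LZZ2025} are only $L^2_1$ with singularities along a set of codimension at least 2. This is harmless here, because only the constant eigenvalue data of $\Psi^{\HN}_\omega$ enters.

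Finally, we would verify that the pointwise identity uses no Kähler condition. It is a linear-algebra identity for $\omega$-contractions of $(1,1)$-forms, so it holds for arbitrary Hermitian $\omega$, and the Gauduchon and astheno-Kähler hypotheses are needed only for the cohomological well-definedness of the two sides.
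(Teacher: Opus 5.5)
Your strategy is the paper's. The Chern--Weil identity \eqref{CW-formula} together with the primitive decomposition gives $4\pi^2\Delta_\omega(E)\ge-\frac{n-1}{n}\norm{\Psi_H^\perp}^2_{L^2(H,\omega)}$ for every smooth $H$, and you then insert the approximate critical structures of \cite{LZZ2025}. Your constants already agree with \eqref{CW-identity}, so the normalization bookkeeping is not where the difficulty lies.

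The step that is not justified as written is the passage to the limit.

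\emph{Why it is a gap.} In your Chern--Weil inequality, $|\Lambda_\omega F^\perp_{H_\delta}|$ is the pointwise norm induced by $H_\delta$ itself. By contrast, \cref{theorem1} gives convergence in $L^p(K)$ for a \emph{fixed} background metric $K$, towards $\frac{2\pi}{V_\omega}\Phi^{\HN}_\omega(E,K)$, which is built from $K$-orthogonal projections. When $E$ is not semistable, the metrics degenerate relative to $K$. Indeed, $\varepsilon\log(K^{-1}H_\varepsilon)=\lambda\Id_E-\Psi_{H_\varepsilon}$ tends to the nonzero limit $\lambda\Id_E-\frac{2\pi}{V_\omega}\Phi^{\HN}_\omega(E,K)$, so $\log(K^{-1}H_\varepsilon)$ blows up along the sequence. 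Hence the two norms are not uniformly equivalent. Your sentence ``the trace-free part converges in $L^2$'' only controls $\int_X|\Psi^\perp_{H_\delta}|_K^2\,dV_\omega$. It does not control $\int_X|\Psi^\perp_{H_\delta}|_{H_\delta}^2\,dV_\omega$, which is the quantity in your inequality.

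\emph{How the paper fixes it.} It uses the structure of \eqref{eq}: $\Psi_{H_\varepsilon}=\lambda\Id_E-\varepsilon\log(K^{-1}H_\varepsilon)$ is self-adjoint with respect to both $K$ and $H_\varepsilon$. So both pointwise norms of $\Psi^\perp_{H_\varepsilon}$ equal the $\ell^2$-norm of its eigenvalues, and \eqref{meancurvature-limit} follows.

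\emph{An alternative fix.} For the inequality alone, a fix that works for any $L^2(K)$-approximating sequence is available. Since $\Psi^\perp_H$ is $H$-self-adjoint, Schur's inequality gives $|\Psi^\perp_H|_H^2=\sum(\text{eigenvalues})^2\le|\Psi^\perp_H|_K^2$ pointwise for every $H$. Hence
\[
\limsup\norm{\Psi^\perp_{H_\delta}}^2_{L^2(H_\delta,\omega)}\le\frac{4\pi^2}{V_\omega}\mathcal I_\omega(E),
\]
which is exactly what you need.

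If you instead had in mind the definition of an approximate critical structure in which projections and norms are taken with respect to $H_\delta$ itself, the issue disappears. However, that is not the form in which \cref{theorem1} is stated. With either addition, your argument is complete.
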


\begin{remark}\label{rem01}
The coefficient $(n-1)/n$ is optimal: for the Fubini-Study metric, direct sums of Hermitian-Einstein line bundles $\mathcal O_{\mathbb P^n}(k_j)$, with at least two distinct $k_j$, attain equality in \eqref{sharp-Bogomolov}.
\end{remark}

Furthermore, we  characterize the equality case.

\begin{theorem}\label{main thm}
Let $(X,\omega)$ be a compact Gauduchon and astheno-K\"ahler manifold of complex dimension $n$, and let $E$ be a holomorphic vector bundle on $X$. Let $\mathcal{Q}_{\alpha,\beta}$ denote the stable torsion-free quotients of a Harder-Narasimhan-Seshadri filtration of $E$ as in \eqref{HNS graded sheaf}. If equality holds in \eqref{sharp-Bogomolov}, then the following statements hold.
\begin{enumerate}[label=\textup{(\roman*)}]
 \item\label{(i)} The reflexive graded sheaf
 \begin{equation}
    \bigl(\Gr^{\HNS}_\omega(E)\bigr)^{**}
    =\bigoplus_{\alpha,\beta}\mathcal{Q}_{\alpha,\beta}^{**}
 \end{equation}
 is locally free.  Every stable summand $\mathcal{Q}_{\alpha,\beta}^{**}$ admits a projectively flat Hermitian metric.

 \item\label{(ii)} More precisely, there is a Hermitian metric $H_\infty$ on this graded bundle for which the summands are orthogonal and, writing $F_{\alpha,\beta}$ for their Chern curvatures,
 \begin{equation}\label{curvature}
  \ii F_{\alpha,\beta}
  =\left(\gamma
   +\frac{2\pi}{nV_\omega}
       (\mu_\alpha-\mu_\omega(E))
       \omega\right)\otimes\Id_{\mathcal Q_{\alpha,\beta}^{**}},
 \end{equation}
 where $\gamma=\frac{\ii}{r}\tr F_{H_\infty}$ is a real closed $(1,1)$-form.

\item\label{(iii)} The Harder-Narasimhan-Seshadri filtration of $E$ is a filtration by holomorphic subbundles. In particular, all its stable quotients $\mathcal{Q}_{\alpha,\beta}$ are locally free.

 \item\label{(iv)} Every Harder-Narasimhan quotient satisfies
 \begin{equation}\label{HNquotient-discriminant}
   \Delta_\omega(\mathcal Q_\alpha)=0.
 \end{equation}

 \item\label{(v)} If $E$ is not $\omega$-semistable, then $d\omega=0$. Moreover, $[\omega]$ is a positive multiple of a rational K\"ahler class, and hence $X$ is projective.
\end{enumerate}
Conversely, \ref{(i)}--\ref{(iii)} imply equality in \eqref{sharp-Bogomolov}.
\end{theorem}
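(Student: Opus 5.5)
The plan is to run the Hermitian-Yang-Mills flow $H_t$ on $E$ from a fixed background metric $H_0$, and to track the discriminant through the Chern-Weil formula along the flow. Write $F_t$ for the Chern curvature of $H_t$, let $\Lambda_\omega$ be contraction, and let $F_t^\perp=F_t-\frac1r\tr F_t\,\Id$ be the trace-free part. Since $\omega$ is astheno-K\"ahler, pointwise Chern-Weil gives
\begin{equation*}
 \Delta_\omega(E)=\frac{1}{4\pi^2}\int_X\Bigl(\abs{F_t^\perp}^2-\abs{\Lambda_\omega F_t^\perp}^2\Bigr)\frac{\omega^n}{n!},
\end{equation*}
with the standard normalization. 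The algebraic step is to split $F^\perp$ into its $\omega$-trace part and a primitive part, $F^\perp=F^\perp_0+\frac1n(\Lambda_\omega F^\perp)\,\omega$. Then $\abs{F^\perp}^2=\abs{F^\perp_0}^2+\frac1n\abs{\Lambda_\omega F^\perp}^2$, so
\begin{equation*}
 \Delta_\omega(E)=\frac1{4\pi^2}\int_X\abs{F^\perp_{0,t}}^2-\frac{n-1}{4\pi^2 n}\norm{\Lambda_\omega F^\perp_t}_{L^2}^2 .
\end{equation*}
Theorem \ref{thm1} already follows from this identity together with the $L^2$ limit of $\ii\Lambda_\omega F_t^\perp$ coming from $L^p$-approximate critical structures. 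That limit is $\frac{2\pi}{V_\omega}\sum_\alpha(\mu_\alpha-\mu_\omega(E))\pi_\alpha$, whose squared $L^2$ norm is $\frac{4\pi^2}{V_\omega}\mathcal I_\omega(E)$.

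\textbf{Equality case.} Assume equality holds in \eqref{sharp-Bogomolov}. The steps are as follows.
\begin{itemize}
 \item Along the flow, $\int\abs{F^\perp_{0,t}}^2\to 0$. The Bando--Siu/Jacob/Sibley type convergence on compact Gauduchon manifolds (as in \cite{LZZ2025}) then lets us pass to an Uhlenbeck limit off a singular set $\Sigma$ of codimension $\ge4$.
 \item The limit is the reflexive HNS graded sheaf, with an admissible metric $H_\infty$. Its curvature has vanishing primitive trace-free part, and $\ii\Lambda_\omega F^\perp_\infty$ is constant on each HN block.
 \item Hence on $X\setminus\Sigma$ we get $\ii F^\perp_\infty=\frac1n(\ii\Lambda_\omega F^\perp_\infty)\,\omega$. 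This is exactly \eqref{curvature} with $\gamma=\frac{\ii}{r}\tr F_{H_\infty}$.
 \item The curvature is bounded, so the Bando--Siu removable singularity theorem extends the bundle and metric across $\Sigma$. This gives local freeness in (i) and (ii).
 \item Each summand $\mathcal Q^{**}_{\alpha,\beta}$ is then projectively flat: its trace-free curvature vanishes because its curvature is a scalar form times the identity.
\end{itemize}

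\textbf{Part (iii).} I would compare Chern numbers. $\Delta_\omega(E)$ equals the discriminant of $\Gr^{\HNS}$ plus nonnegative correction terms measuring the failure of the reflexive quotients to be locally free and the filtration to be by subbundles. These are terms of the form $\int (c_2(\mathcal Q^{**})-c_2(\mathcal Q))\wedge\omega^{n-2}\ge 0$, by Bando--Siu/Sibley type arguments. In the equality case these terms vanish. Vanishing of the $c_2$ differences forces $\mathcal Q_{\alpha,\beta}=\mathcal Q_{\alpha,\beta}^{**}$ locally free, and saturation then gives subbundles.

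\textbf{Part (iv).} This follows by restricting \eqref{curvature} to each HN block: all summands of a block have the same scalar $\omega$-term, so the block is projectively flat up to a common twist, and its discriminant vanishes.

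\textbf{Converse.} If (i)--(iii) hold, then $E$ has a filtration by subbundles whose graded bundle carries the metric \eqref{curvature}. Discriminants are additive modulo the $c_1$ cross terms, so we can compute $\Delta_\omega(E)$ through the smooth graded object. That computation returns exactly the right side of \eqref{sharp-Bogomolov}, since the primitive part vanishes.

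\textbf{Part (v) and main obstacle.} If $E$ is not semistable, take two blocks $\alpha\ne\alpha'$ with $\mu_\alpha\ne\mu_{\alpha'}$. The difference of the normalized traces of their curvatures, $\frac{\ii}{r_\alpha}\tr F_\alpha-\frac{\ii}{r_{\alpha'}}\tr F_{\alpha'}$, is closed and equals a nonzero constant multiple of $\omega$. Hence $d\omega=0$. Moreover $[\omega]$ is proportional to $c_1(\det\mathcal Q_\alpha)/r_\alpha-c_1(\det\mathcal Q_{\alpha'})/r_{\alpha'}$, which is a rational class, so Kodaira's embedding theorem gives projectivity. The main obstacle is the convergence step: identifying the flow limit with the HNS reflexive graded sheaf on a non-K\"ahler Gauduchon manifold, with enough control to upgrade $L^2$ vanishing of $F^\perp_0$ to smooth curvature identities and to extract the Chern-number inequality behind (iii). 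I would first try to import the Sibley/Jacob analysis, adapted using the Gauduchon and astheno-K\"ahler conditions for the integrations by parts.
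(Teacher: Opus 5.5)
\textbf{Main gap: part (iii).} Grant your comparison
$\Delta_\omega(E)=\Delta_\omega\bigl(\Gr^{\HNS}_\omega(E)^{**}\bigr)+2\sum_{\alpha,\beta}\int_X\operatorname{ch}_2(\mathcal Q_{\alpha,\beta}^{**}/\mathcal Q_{\alpha,\beta})\wedge\frac{\omega^{n-2}}{(n-2)!}$, and grant that the first term equals $-\frac{n-1}{nV_\omega}\mathcal I_\omega(E)$ by \eqref{curvature}. (Your sign is reversed: it is $c_2(\mathcal Q)-c_2(\mathcal Q^{**})$ that is represented by an effective cycle. This does not affect the logic.) Even so, equality only kills the codimension-two cycles. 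That is, it only shows $\codim\supp(\mathcal Q^{**}_{\alpha,\beta}/\mathcal Q_{\alpha,\beta})\geq 3$. Torsion supported in codimension $\geq 3$ is invisible to $\operatorname{ch}_2$. For example, the ideal sheaf of a point on a threefold has locally free double dual $\mathcal O$ and the same $c_2$, but it is not locally free. So your sentence ``vanishing of the $c_2$ differences forces $\mathcal Q_{\alpha,\beta}=\mathcal Q_{\alpha,\beta}^{**}$'' is false as stated.

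What is missing is the purity result of Sibley--Wentworth (their Proposition~2.3), which the paper packages as \cref{local freeness}. It says: for a filtration of a \emph{vector bundle} by saturated subsheaves whose graded pieces have locally free reflexive hulls, the singular set is empty or of pure codimension two. Only this turns ``codimension $\geq 3$'' into $\mathcal Q_{\alpha,\beta}=\mathcal Q_{\alpha,\beta}^{**}$.

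\textbf{Second problem in (iii).} The identity $\operatorname{ch}_2(\mathcal Q^{**})-\operatorname{ch}_2(\mathcal Q)=[Z^{(2)}]$ with $Z^{(2)}$ effective, and its positivity against $\omega^{n-2}$, is a K\"ahler (de Rham) statement. You apply it uniformly, but $\omega$ need not be K\"ahler when $E$ is semistable. The paper proves (v) first, so in the non-semistable case $d\omega=0$ and the cycle argument is legitimate. In the semistable case, equality means $\Delta_\omega(E)=0$. The paper then uses PZZ2023, Theorem~1.7, which gives a subbundle filtration with stable quotients of slope $\mu_\omega(E)$. It combines this with \cref{chosen JH} to reach an arbitrary Jordan--H\"older filtration. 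You need this case split, or a Bott--Chern version of the cycle identity.

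\textbf{Smaller points.}
\begin{enumerate}
\item The decay $\norm{(F^\perp_{H(t)})_0}_{L^2}\to0$ along the \emph{flow} needs the flow version of the energy limit, $\norm{\Psi^\perp_{H(t)}}^2_{L^2}\to 4\pi^2\mathcal I_\omega(E)/V_\omega$. This comes from \eqref{eigenvalue-convergence} together with \eqref{meancurvature-bdd}. The approximate critical structures solving \eqref{eq} do not give it, since they are a different family of metrics.
\item For (ii) you must show that the holomorphic splitting into the $\mathcal Q^{**}_{\alpha,\beta}$ is $H_\infty$-orthogonal and parallel. The paper does this with the parallel eigenspace decomposition of $\Psi_{A_\infty}$, and then uses the degree formula to kill the second fundamental forms.
\end{enumerate}

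\textbf{Your route to (i).} Using bounded curvature plus a removable-singularity theorem is a plausible alternative to the paper's no-bubbling argument (\cref{local-energy lem}, \cref{no-bubbling}). It also has a merit: your discriminant comparison does not need $E_\infty\cong E$ as smooth bundles. The paper's argument is more elementary, however, and it yields that smooth isomorphism directly.
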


The last assertion \ref{(v)} expresses a rigidity phenomenon specific to equality in the sharp inequality.  It also gives the following description of the case where the given Hermitian metric $\omega$ is not K\"ahler.

\begin{corollary}\label{nonkahler}
Assume the hypotheses of \cref{thm1} and $d\omega\neq0$.  Equality holds in \eqref{sharp-Bogomolov} if and only if $E$ is $\omega$-semistable and $\Delta_\omega(E)=0$. In that case there is a filtration by holomorphic subbundles
\begin{equation}
 0=\mathcal{F}_0\subset \mathcal{F}_1\subset\cdots\subset \mathcal{F}_m=E,
\end{equation}
whose quotients are stable and projectively flat, and satisfy
\begin{equation}\label{coro}
  \Delta_\omega(\mathcal{F}_j/\mathcal{F}_{j-1})=0, \qquad \frac{c_1^{\mathrm{BC}}(\mathcal{F}_j/\mathcal{F}_{j-1})}{\rk(\mathcal{F}_j/\mathcal{F}_{j-1})}
 =\frac{c_1^{\mathrm{BC}}(E)}{r}.
\end{equation}
\end{corollary}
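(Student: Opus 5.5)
We derive \cref{nonkahler} directly from \cref{thm1} and \cref{main thm}; no new analysis is required. First we prove the equivalence. Suppose equality holds in \eqref{sharp-Bogomolov} and $d\omega\neq0$. If $E$ were not $\omega$-semistable, then \cref{main thm}\ref{(v)} would force $d\omega=0$, which is a contradiction. Hence $E$ is $\omega$-semistable, so $l=1$ and $\mathcal I_\omega(E)=0$. Equality therefore reads $\Delta_\omega(E)=0$. Conversely, if $E$ is $\omega$-semistable then $\mathcal I_\omega(E)=0$, and $\Delta_\omega(E)=0$ is exactly equality in \eqref{sharp-Bogomolov}.

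For the filtration, we apply \cref{main thm} in the semistable equality case. The Harder-Narasimhan filtration is trivial, so a Harder-Narasimhan-Seshadri filtration is just a Seshadri filtration $0=\mathcal F_0\subset\cdots\subset\mathcal F_m=E$ with stable quotients $\mathcal Q_{1,j}=\mathcal F_j/\mathcal F_{j-1}$. By \cref{main thm}\ref{(iii)}, the $\mathcal F_j$ are holomorphic subbundles and the quotients are locally free. By \ref{(i)}, each quotient equals its double dual and carries a projectively flat metric.

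To obtain \eqref{coro}, we use the curvature formula \eqref{curvature} with $\mu_1=\mu_\omega(E)$. This gives $\ii F_{1,j}=\gamma\otimes\Id$ on the quotient $\mathcal F_j/\mathcal F_{j-1}$ of rank $s_j$. Taking traces gives the first Chern form $\frac{s_j}{2\pi}\gamma$, so
\[
\frac{c_1^{\mathrm{BC}}(\mathcal F_j/\mathcal F_{j-1})}{s_j}=\Bigl[\frac{\gamma}{2\pi}\Bigr]_{\mathrm{BC}}.
\]
Since $\gamma=\frac{\ii}{r}\tr F_{H_\infty}$ represents $\frac{2\pi}{r}c_1^{\mathrm{BC}}$ of the graded bundle, and $c_1^{\mathrm{BC}}$ is additive in exact sequences of bundles, this class equals $c_1^{\mathrm{BC}}(E)/r$. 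For the discriminant, the curvature is a scalar form times the identity, so the form $2c_2-\frac{s-1}{s}c_1^2$ vanishes pointwise. This follows because its trace-free part is zero: concretely, $\tr(F^2)-\frac1s(\tr F)^2=0$ when $F=\phi\,\Id$. Hence $\Delta_\omega(\mathcal F_j/\mathcal F_{j-1})=0$, using a smooth representative as allowed after \eqref{discriminant}.

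The only delicate point is the identification of $[\gamma]$ with $\frac{2\pi}{r}c_1^{\mathrm{BC}}(E)$. This requires the graded bundle to be a vector bundle with a global Hermitian metric, which \ref{(iii)} provides. It also requires additivity of Bott-Chern first Chern classes along the holomorphic subbundle filtration, which holds via second fundamental forms: the trace of the curvature differs from the sum of the quotient traces by a $\ddbar$-exact term.
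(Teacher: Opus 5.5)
Your argument is correct. The equivalence half matches the paper exactly: \cref{main thm}\ref{(v)} rules out non-semistability, and then $\mathcal I_\omega(E)=0$.

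For the filtration you take a different route. The paper does not reuse \cref{main thm} here. It cites \cite[Theorem 1.7]{PZZ2023} directly, which already supplies a subbundle filtration with stable quotients satisfying \eqref{coro}. It then gets projective flatness by putting a Hermitian--Einstein metric on each stable quotient (\cite{LY1987,LT2006}). With $\Lambda_\omega F^\perp=0$ and $\Delta_\omega=0$, the Chern--Weil formula \eqref{CW-formula} forces $F^\perp=0$.

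You instead specialize \cref{main thm}\ref{(i)}--\ref{(iii)} to $l=1$ and read \eqref{coro} off the curvature formula \eqref{curvature}. Since every summand satisfies $\ii F_{1,j}=\gamma\otimes\Id$ with one common $\gamma$, all normalized first Chern classes coincide, hence equal $c_1^{\mathrm{BC}}(E)/r$ by additivity. The discriminant form also vanishes pointwise.

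What your route buys is a slightly stronger statement: every Jordan--H\"older filtration of $E$ has the stated properties, not merely some filtration. The projectively flat metrics on all quotients also come from the single limit metric $H_\infty$, which makes the $c_1$ identity transparent. The paper's route is shorter and bypasses the flow, no-bubbling and Atiyah--Bott--Bando--Siu machinery behind \ref{(i)}--\ref{(ii)}.

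Your route is not independent of \cite{PZZ2023}. The semistable case of \ref{(iii)}, which you rely on, is itself proved in the paper by invoking \cite[Theorem 1.7]{PZZ2023} together with \cref{chosen JH}.
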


This paper is organized as follows. In Section 2, we give a proof of \cref{thm1}. In Section 3, we prove \cref{main thm} and \cref{nonkahler}.

\subsection*{Acknowledgments}
The authors thank Professor Zhenlei Zhang for his insightful suggestions and helpful discussions.

\section{The sharp generalized Bogomolov inequality}

Let $H$ be a Hermitian metric on $E$, let $D_H$ be its Chern connection, and let $F_H$ be the curvature.  Set
\begin{equation}
 \Psi_H=\ii\Lambda_\omega F_H,
 \qquad
 F_H^\perp=F_H-\frac1r\tr(F_H)\otimes\Id_E,
 \qquad
 \Psi_H^\perp=\Psi_H-\frac1r\tr(\Psi_H)\Id_E.
\end{equation}
For an $\End(E)$-valued $(1,1)$-form $\phi$, its primitive decomposition is
\begin{equation}\label{primitive-decomposition}
 \phi=\phi_0+\frac1n(\Lambda_\omega \phi)\omega,
 \qquad \Lambda_\omega \phi_0=0.
\end{equation}
Since $|\omega|_\omega^2=n$,
\begin{equation}\label{primitive-norm}
 |\phi|^2=|\phi_0|^2+\frac1n|\Lambda_\omega \phi|^2.
\end{equation}

If $\omega$ is astheno-K\"ahler, the Chern-Weil formula in the normalization \eqref{discriminant} (see \cite{Kob1987,Lub1982}) is
\begin{equation}\label{CW-formula}
 4\pi^2\Delta_\omega(E)
 =\int_X\left(|F_H^\perp|^2
       -|\Lambda_\omega F_H^\perp|^2\right)dV_\omega.
\end{equation}
Together with \eqref{primitive-norm}, this becomes
\begin{equation}\label{CW-identity}
 4\pi^2\Delta_\omega(E)
 =\norm{(F_H^\perp)_0}_{L^2(H,\omega)}^2
 -\frac{n-1}{n}\norm{\Psi_H^\perp}_{L^2(H,\omega)}^2.
\end{equation}

In \cite{LZZ2025}, the authors study the asymptotic behavior of solution to the following perturbed Hermitian-Einstein equation on $(E, \bar{\partial }_{E})$, introduced by Uhlenbeck and Yau (\cite{UY1986}):
\begin{equation} \label{eq}
\sqrt{-1}\Lambda_{\omega } F_{H}-\lambda \cdot \textmd{Id}_E+\varepsilon \log (K^{-1}H)=0,
\end{equation}
where $K$ is any fixed background metric and $\lambda=2\pi \mu_\omega(E)/V_\omega$. For each $\mathcal{E}_{\alpha }$,  we have the associated orthogonal projection $\pi_{\alpha }^{K}:E\rightarrow E$ onto $\mathcal{E}_{\alpha }$ with respect to the metric $K$. Each $\pi_{\alpha }^{K}$ belongs to $L_{1}^{2}(X, \End(E))$. Define
\begin{equation}
\Phi_{\omega}^{\HN} (E, K)=\sum_{\alpha=1}^{l}\mu_{\omega } (\mathcal{Q}_{\alpha })(\pi_{\alpha }^{K}-\pi_{\alpha-1}^{K}).
\end{equation}
This is an $L_{1}^{2}$-bounded  Hermitian endomorphism, called the Harder-Narasimhan projection of  $(E, \bar{\partial}_{E})$ with respect to $K$. We have the following theorem.

\begin{theorem}[{\cite[Theorem 1.3]{LZZ2025}}] \label{theorem1}
Let $(X, \omega )$ be a compact Gauduchon manifold of complex dimension $n$,  $(E,\bar{\partial}_E)$ be a holomorphic vector bundle over $X$, $K$ be a fixed Hermitian metric on $E$ and $H_{\varepsilon}$ be a solution of perturbed equation (\ref{eq}). Then there exists a sequence $\varepsilon_{i}\rightarrow 0$ such that
 \begin{equation}\label{eqthm1}
 \lim_{i\rightarrow \infty }\left\| \sqrt{-1}\Lambda_{\omega }F_{H_{\varepsilon_{i}}}-\frac{2\pi }{V_{\omega }}\Phi_{\omega }^{HN}(E, K)\right\|_{L^{p}(K)}=0
 \end{equation}
 for any $0<p<+\infty $.
\end{theorem}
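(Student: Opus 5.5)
The statement is \cite[Theorem 1.3]{LZZ2025}, which the paper cites rather than proves. The plan I would follow to prove it is the Uhlenbeck--Yau continuity method, combined with a convexity (Atiyah--Bott type) argument that identifies the limiting eigenvalue distribution with the Harder--Narasimhan type.

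First I would secure the basic estimates for the perturbed equation \eqref{eq}. For each $\varepsilon>0$, the continuity method on a Gauduchon manifold gives a smooth solution $H_\varepsilon$; this step is standard. Write $h_\varepsilon=K^{-1}H_\varepsilon$ and $s_\varepsilon=\log h_\varepsilon$. The maximum principle applied to $|s_\varepsilon|$ (more precisely, to $\log\tr h_\varepsilon$ and $\log\tr h_\varepsilon^{-1}$) gives the uniform bound
\begin{equation*}
\sup_X|\varepsilon s_\varepsilon|_K\le \sup_X|\Psi_K-\lambda\Id_E|_K .
\end{equation*}
By \eqref{eq}, this bounds $\Psi_{H_\varepsilon}$ uniformly in $C^0$. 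Since $L^\infty$ bounds interpolate with $L^2$ convergence, it is then enough to prove \eqref{eqthm1} for $p=2$.

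Next I would establish a lower bound by convexity. For each saturated $\mathcal E_\alpha$, the Chern--Weil formula gives
\begin{equation*}
\frac{2\pi}{V_\omega}\deg_\omega(\mathcal E_\alpha)=\int_X\tr\bigl(\pi_\alpha^{H_\varepsilon}\Psi_{H_\varepsilon}\bigr)dV_\omega-\norm{\bar\partial\pi_\alpha^{H_\varepsilon}}^2_{L^2}.
\end{equation*}
More generally, this holds for any saturated subsheaf, with the second fundamental form term nonpositive. These inequalities say that the pointwise eigenvalues of $\Psi_{H_\varepsilon}$ dominate, in the majorization sense, the vector of HN slopes $\frac{2\pi}{V_\omega}\mu_\alpha$ with multiplicities $r_\alpha$. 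Hence for every convex $\varphi$ we get
\begin{equation*}
\int_X\tr\varphi(\Psi_{H_\varepsilon})\ge V_\omega\sum_\alpha r_\alpha\varphi\Bigl(\frac{2\pi}{V_\omega}\mu_\alpha\Bigr).
\end{equation*}

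For the matching upper bound, I would show that $\norm{\Psi_{H_\varepsilon}-\lambda\Id_E}^2_{L^2}$ converges along a sequence to $\frac{4\pi^2}{V_\omega}\sum_\alpha r_\alpha(\mu_\alpha-\mu_\omega(E))^2$. I expect this to be the main obstacle. I would argue by contradiction on the normalized endomorphisms $u_\varepsilon=\varepsilon s_\varepsilon$, which are uniformly bounded. Pairing \eqref{eq} with $u_\varepsilon$ and using the Donaldson-functional identity bounds $\int\langle\Psi(u_\varepsilon)D u_\varepsilon,Du_\varepsilon\rangle$, which gives a uniform $L^2_1$ bound on $u_\varepsilon$. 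A subsequence then converges weakly in $L^2_1$ to some $u_\infty$ with $\Psi_{H_\varepsilon}\to\lambda\Id_E-u_\infty$ in $L^2$. Following Uhlenbeck--Yau, the spectral projections of $u_\infty$ satisfy $(\Id-\pi)\bar\partial\pi=0$, so they are weakly holomorphic and define saturated subsheaves of $E$. The resulting degree identities, together with the extremality of the HN filtration among all filtrations, force the eigenvalues of $\lambda\Id_E-u_\infty$ to be exactly $\frac{2\pi}{V_\omega}\mu_\alpha$. They also force the corresponding filtration to be the HN filtration; here uniqueness of the HN filtration does the work.

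Finally, I would pass from $H_\varepsilon$ to $K$. The limit $\lambda\Id_E-u_\infty$ is an endomorphism that is $K$-self-adjoint, since $\log(K^{-1}H_\varepsilon)$ is $K$-self-adjoint. Its eigenspace filtration is $\mathcal E_\bullet$, so it equals $\frac{2\pi}{V_\omega}\Phi^{\HN}_\omega(E,K)$, which is built from the $K$-orthogonal projections $\pi_\alpha^K$. Thus $\Psi_{H_{\varepsilon_i}}\to\frac{2\pi}{V_\omega}\Phi^{\HN}_\omega(E,K)$ in $L^2(K)$. The uniform $C^0$ bound from the first step then upgrades this to $L^p(K)$ convergence for every $0<p<\infty$.
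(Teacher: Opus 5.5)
The paper gives no proof of this statement; it is quoted from \cite{LZZ2025}, so there is no in-paper argument to compare with. Judged on its own, your route is viable: $C^0$ bound on $u_\varepsilon=\varepsilon\log(K^{-1}H_\varepsilon)$, weak $L^2_1$ limit, Uhlenbeck--Yau holomorphic spectral projections, and the Atiyah--Bott convexity lower bound. However, the decisive identification step is misformulated.

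\textbf{The identification step.} ``Extremality of the HN filtration among all filtrations'' cannot be a convex-function comparison. The saturated filtration $\mathcal O(-1)\subset\mathcal O^{\oplus2}$ on $\mathbb P^1$ has slopes proportional to $(-1,1)$, which have more spread than the HN type $(0,0)$. Nor are there ``degree identities''; the limit gives an inequality. With $\ell_\varepsilon(x,y)=\frac{e^{(y-x)/\varepsilon}-1}{y-x}$, the Donaldson identity multiplied by $\varepsilon$ is
\[
\int_X\tr\bigl((\Psi_K-\lambda\Id_E)u_\varepsilon\bigr)\,dV_\omega+\int_X\bigl\langle\ell_\varepsilon(u_\varepsilon)(\bar\partial u_\varepsilon),\bar\partial u_\varepsilon\bigr\rangle_K\,dV_\omega=-\norm{u_\varepsilon}_{L^2(K)}^2 .
\]
So the kernel is $\ell_\varepsilon$, not $\Psi(u_\varepsilon)$. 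On $[-C_0,C_0]^2$ it satisfies $\ell_\varepsilon\ge(1-e^{-2C_0/\varepsilon})/(2C_0)$, and this is what actually gives your $L^2_1$ bound.

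As $\varepsilon\downarrow0$, $\ell_\varepsilon$ increases to $\ell(x,y)=(x-y)^{-1}$ for $x>y$ and $\ell(x,y)=+\infty$ for $x\le y$. The infinite values give two things: constancy of the eigenvalues of $u_\infty$, which you need before speaking of spectral projections, and $(\Id-\pi)\bar\partial\pi=0$.

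Write $\Psi_\infty:=\lambda\Id_E-u_\infty=\sum_{j=1}^k\nu_j(\pi_j-\pi_{j-1})$, with $\nu_1>\cdots>\nu_k$, multiplicities $m_j$, and $\mathcal S_j=\operatorname{im}\pi_j$. Combine lower semicontinuity, the limit $\Phi\uparrow\ell$, the $K$-Chern--Weil formula for each $\pi_j$, the identity $\int_X\langle\ell(u_\infty)\bar\partial u_\infty,\bar\partial u_\infty\rangle\,dV_\omega=\sum_{j<k}(\nu_j-\nu_{j+1})\norm{\bar\partial\pi_j}_{L^2}^2$, and $\sum_jm_j\nu_j=r\lambda$. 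After the second fundamental forms cancel, this gives
\[
\sum_j m_j\nu_j^2\le\frac{2\pi}{V_\omega}\Bigl(\nu_k\deg_\omega E+\sum_{j<k}(\nu_j-\nu_{j+1})\deg_\omega\mathcal S_j\Bigr).
\]

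\textbf{Closing the argument.} The HN input you need is the polygon bound $\deg_\omega\mathcal S_j\le P(\rk\mathcal S_j)$, where $P(m):=\sum_{a\le m}\hat\mu_{a,\omega}$. It may be inserted only because the weights $\nu_j-\nu_{j+1}$ are positive. Let $\hat\nu\in\mathbb R^r$ be the eigenvalues of $\Psi_\infty$ repeated by multiplicity. Abel summation then turns the display into $|\hat\nu|^2\le\frac{2\pi}{V_\omega}\langle\hat\nu,\vec\mu_\omega(E)\rangle$.

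This inequality alone does not determine $\hat\nu$; it already holds for $u_\infty=0$. So your Step 2 is indispensable. With $\varphi(x)=x^2$ and strong $L^2$ convergence it gives $|\hat\nu|\ge\frac{2\pi}{V_\omega}|\vec\mu_\omega(E)|$, and Cauchy--Schwarz then forces equality throughout. Hence $\hat\nu=\frac{2\pi}{V_\omega}\vec\mu_\omega(E)$. It follows that $\rk\mathcal S_j=\rk\mathcal E_j$ and $\deg_\omega\mathcal S_j=\deg_\omega\mathcal E_j$ at strict vertices of the polygon. This forces $\mathcal S_j=\mathcal E_j$, by comparing $\mathcal S_j\cap\mathcal E_j$ with $\mathcal S_j+\mathcal E_j$.

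\textbf{Two smaller corrections.}
\begin{enumerate}
\item The majorization in Step 2 is not pointwise. It holds for the averaged eigenvalues $\frac1{V_\omega}\int_X\lambda_a(H_\varepsilon,\omega)\,dV_\omega$: apply Ky Fan, integrate, use concavity of the partial sums between the HN break points, and then apply Jensen.
\item The Chern--Weil formula should read $2\pi\deg_\omega(\mathcal E_\alpha)=\int_X\tr(\pi_\alpha\Psi_{H_\varepsilon})\,dV_\omega-\norm{\bar\partial\pi_\alpha}_{L^2}^2$, not with the factor $\frac{2\pi}{V_\omega}$.
\end{enumerate}
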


\begin{proof}[Proof of \cref{thm1}]

After a conformal transformation of $K$, we can assume that $\sqrt{-1}\Lambda_{\omega}\tr F_K= r\lambda$. Taking the trace of  (\ref{eq}) and applying the maximum principle, we obtain $\det(K^{-1}H_{\varepsilon})=1$. In particular, the sequence $H_j:=H_{\varepsilon_j}$ in Theorem \ref{theorem1} satisfies
\begin{equation}\label{trace-normalization}
\ii\Lambda_\omega\tr F_{H_j}
=\frac{2\pi r}{V_\omega}\mu_\omega(E).
\end{equation}
Consequently,
\begin{equation}\label{tracefree-convergence}
\lim_{j\to\infty}
\left\|
\Psi_{H_j }^{\perp}
-\frac{2\pi}{V_\omega}
\left(
\Phi^{HN}_\omega(E, K)
-\mu_\omega(E)\Id_E
\right)
\right\|_{L^2(K)}
=0.
\end{equation}
Observe that \eqref{eq} gives $\Psi_{H_j}=\lambda\Id_E-\varepsilon_j\log(K^{-1}H_j)$. Thus $\Psi_{H_j}$ is self-adjoint with respect to both $K$ and $H_j$, as is its trace-free part.  Consequently,
\begin{equation}\label{meancurvature-limit}
\lim_{j\to\infty}
\int_X |\Psi_{H_j}^{\perp}|_{H_{j}}^2dV_\omega
=\lim_{j\to\infty}
\int_X |\Psi_{H_j}^{\perp}|_{K}^2dV_\omega
=\frac{4\pi^2}{V_\omega}\mathcal{I}_\omega(E).
\end{equation}
This together with (\ref{CW-identity}) implies the sharp generalized Bogomolov inequality (\ref{sharp-Bogomolov}).
\end{proof}

\section{The equality case and projective flatness}

Because $\omega$ is Gauduchon, we can take a conformal change of the initial metric $H_0$ so that
\begin{equation}\label{trace0-normalization}
 \ii\Lambda_\omega\tr F_{H_0}
 =\frac{2\pi r}{V_\omega}\mu_\omega(E).
\end{equation}
Consider the following Hermitian-Yang-Mills flow
\begin{equation}\label{HYM-flow}
\left\{\begin{split}
H^{-1}(t)\frac{\partial H(t)}{\partial t}
 &=-2\Big(\ii\Lambda_\omega F_{H(t)}
       -\frac{2\pi}{V_\omega}\mu_\omega(E)\Id_E\Big),\\
 H(0)&=H_0.
 \end{split}\right.
\end{equation}
The solution exists for all $t\geq0$ (\cite{Zhang2005}).  The normalization \eqref{trace0-normalization} and the scalar heat equation for the trace imply
\begin{equation}\label{trace-fixed}
 \det H(t)=\det H_0,
 \qquad
 \tr F_{H(t)}=\tr F_{H_0}.
\end{equation}
Moreover, the maximum principle gives
\begin{equation}\label{meancurvature-bdd}
 \sup_{X\times[0,\infty)}|\Psi_{H(t)}|_{H(t)}<\infty.
\end{equation}

Let $r=\rk E$. For any Hermitian metric $H$ on $E$, we denote the $r$ eigenvalues of the mean curvature $\Psi_H$  by $\lambda_1(H,\omega)$, $\lambda_2(H,\omega)$, $\cdots$, $\lambda_r(H,\omega)$, listed in nonincreasing order. Set
\begin{equation}
\vec\lambda(H,\omega)=(\lambda_1(H,\omega),\lambda_2(H,\omega),\cdots,\lambda_r(H,\omega)).
\end{equation}
Repeating each HN slope $\mu_\alpha$ exactly $r_\alpha$ times, one can construct the nonincreasing $r$-tuple
\begin{equation}\vec{\mu}_\omega(E )=(\hat{\mu}_{1, \omega} , \cdots , \hat{\mu}_{r, \omega}),
\end{equation}
called the Harder-Narasimhan type of $E$.

In \cite{CLZZ2026}, the authors derive the following limiting property of the eigenvalue functions of mean curvature along the Hermitian-Yang-Mills flow.

\begin{theorem}[{\cite[Theorem 1.4]{CLZZ2026}}] \label{thm3}
Let $(X, \omega )$ be a compact Gauduchon manifold, $E$ a holomorphic vector bundle of rank $r$ over $X$. Assume $H(t)$ is a smooth solution of the Hermitian-Yang-Mills flow (\ref{HYM-flow}). Then
\begin{equation}\label{eigenvalue-convergence}
\lim_{t\rightarrow \infty }\left\|\vec\lambda(H(t), \omega )-\frac{2\pi }{V_{\omega}}\vec\mu_{\omega }(E)\right\|_{L^{2}}=0.
\end{equation}
\end{theorem}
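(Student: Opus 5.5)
The plan is to combine a lower bound valid for \emph{every} Hermitian metric, coming from the Harder-Narasimhan filtration, with an asymptotic upper bound on the $L^2$-norm of the mean curvature along the flow \eqref{HYM-flow}. The upper bound should come from the $L^p$-approximate critical structures of \cref{theorem1}. Write $\hat\mu_i$ for the entries of $\vec\mu_\omega(E)$ and $c=2\pi/V_\omega$. Then
\begin{equation*}
\Bigl\|\vec\lambda(H(t),\omega)-c\vec\mu_\omega(E)\Bigr\|_{L^2}^2
=\|\Psi_{H(t)}\|_{L^2}^2-2c\int_X\sum_i\hat\mu_i\lambda_i\,dV_\omega+c^2V_\omega\sum_i\hat\mu_i^2 .
\end{equation*}
So it suffices to prove two estimates. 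The first is $\int_X\sum_i\hat\mu_i\lambda_i\,dV_\omega\ge cV_\omega\sum_i\hat\mu_i^2$ for every metric. The second is $\limsup_{t\to\infty}\|\Psi_{H(t)}\|_{L^2}^2\le c^2V_\omega\sum_\alpha r_\alpha\mu_\alpha^2$.

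\textbf{Lower bound.} Let $\pi_\alpha$ be the $H$-orthogonal projection onto $\mathcal E_\alpha$, which is in $L^2_1$ and smooth off a set of codimension $\ge2$. The Chern-Weil formula for subsheaves gives
\begin{equation*}
\int_X\tr(\pi_\alpha\Psi_H)\,dV_\omega=2\pi\deg_\omega(\mathcal E_\alpha)+\|\bar\partial\pi_\alpha\|_{L^2}^2\ge2\pi\deg_\omega(\mathcal E_\alpha),
\end{equation*}
and this is independent of the representative because $\omega$ is Gauduchon. Pointwise, the trace over a subspace of rank $k_\alpha=\rk\mathcal E_\alpha$ is at most the sum of the top $k_\alpha$ eigenvalues. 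Hence $\int_X\sum_{i\le k_\alpha}\lambda_i\ge 2\pi\deg_\omega(\mathcal E_\alpha)=c V_\omega\sum_{i\le k_\alpha}\hat\mu_i$, with equality for $\alpha=l$ by the trace. Since $\hat\mu_i$ is constant on each block and decreasing, Abel summation over the blocks yields the first estimate.

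\textbf{Upper bound.} By the standard Donaldson-type computation, $t\mapsto\|\Psi_{H(t)}\|_{L^2}^2$ is nonincreasing along \eqref{HYM-flow}, so its limit $m(H_0)$ exists. As in the proof of \cref{thm1}, \cref{theorem1} produces metrics $H_j$ with $\|\Psi_{H_j}\|_{L^2}^2\to c^2V_\omega\sum_\alpha r_\alpha\mu_\alpha^2$. Monotonicity gives $m(H_j)\le\|\Psi_{H_j}\|^2$. It therefore remains to show that $m(H_0)$ does not depend on the initial metric.

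\textbf{Main obstacle.} The step I expect to be hardest is this independence of the initial metric. I would first try the fact that for two flows $H(t)$ and $\tilde H(t)$ the Donaldson distance $\sigma(H(t),\tilde H(t))$ satisfies $(\partial_t+2\ii\Lambda_\omega\partial\bar\partial)\sigma\le0$, so it stays uniformly bounded by the maximum principle. Uniformly $C^0$-equivalent metrics along the flow, combined with \eqref{meancurvature-bdd} and the evolution inequality for $|\Psi|^2$, should give $\int_t^{t+1}\bigl|\,\|\Psi_{H(s)}\|^2-\|\Psi_{\tilde H(s)}\|^2\bigr|\,ds\to0$. To get this I would use the standard local estimate $\int_X|\bar\partial_{H}\Psi_H|^2\to0$ along time-slabs. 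Failing that, I would run the argument of \cref{theorem1} directly along the flow, taking $L^p$ limits of $\Psi_{H(t)}$ and identifying the weak limit of the eigen-projections with the HN filtration, as in Jacob's and Sibley's Kähler case.
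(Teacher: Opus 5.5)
The paper does not prove this statement; it quotes it from \cite[Theorem 1.4]{CLZZ2026}, so your outline is judged on its own terms. Its architecture is the standard route of \cite{DW2004,Jac2016,Sib2015} adapted to Gauduchon metrics: an Atiyah--Bott lower bound, monotonicity of $\norm{\Psi_{H(t)}}_{L^2}^2$, the metrics $H_j$ of \cref{theorem1}, and independence of $m(H_0)$ from the initial metric. The first three steps are correct. The Ky Fan inequality, the subsheaf Chern--Weil formula and Abel summation give, for every $H$,
\[
\Bigl\|\vec\lambda(H,\omega)-\frac{2\pi}{V_\omega}\vec\mu_\omega(E)\Bigr\|_{L^2}^2
\le\norm{\Psi_H}_{L^2}^2-\frac{4\pi^2}{V_\omega}\sum_{\alpha}r_\alpha\mu_\alpha^2 .
\]
Moreover, $\frac{d}{dt}\norm{\Psi_{H(t)}}_{L^2}^2=-4\norm{\bar\partial\Psi_{H(t)}}_{L^2}^2$ uses only $\ddbar\omega^{n-1}=0$.

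The gap is the independence step, and the ingredients you list do not give it when $d\omega\neq0$. Put $h=H(t)^{-1}\tilde H(t)$ and $S=\Psi_{H(t)}+\Psi_{\tilde H(t)}$. Then $\norm{\Psi_{\tilde H}}^2-\norm{\Psi_H}^2=\int_X\tr\bigl((\Psi_{\tilde H}-\Psi_H)S\bigr)dV_\omega$ with $\Psi_{\tilde H}-\Psi_H=\ii\Lambda_\omega\bar\partial(h^{-1}\partial_Hh)$. Integrating by parts gives
\[
\norm{\Psi_{\tilde H}}_{L^2}^2-\norm{\Psi_H}_{L^2}^2
=\int_X\ii\tr\bigl(h^{-1}\partial_Hh\wedge\bar\partial S\bigr)\wedge\frac{\omega^{n-1}}{(n-1)!}
+\int_X\ii\tr\bigl(h^{-1}\partial_Hh\,S\bigr)\wedge\frac{\bar\partial\omega^{n-1}}{(n-1)!}.
\]
Your ingredients control the first term. $C^0$-equivalence and \eqref{meancurvature-bdd} give $\norm{\partial_Hh}_{L^2}\le C$, via $\int_X|h^{-1/2}\partial_Hh|^2dV_\omega=\int_X\tr\bigl(h(\Psi_{\tilde H}-\Psi_H)\bigr)dV_\omega$, and $\int_t^{t+1}\norm{\bar\partial S}_{L^2}^2ds\to0$. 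The second, torsion term is only bounded by $C\norm{\partial_Hh}_{L^1}$, and nothing in your list makes it small. As written, the argument proves independence only in the K\"ahler case. The inequality form of Donaldson's computation that you quote discards exactly the term that is needed. Also, with the paper's conventions the heat operator is $\partial_t-2\ii\Lambda_\omega\partial\bar\partial$, not $\partial_t+2\ii\Lambda_\omega\partial\bar\partial$.

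To close the gap, keep the dissipation term. Pointwise,
\[
\Bigl(\frac{\partial}{\partial t}-2\ii\Lambda_\omega\partial\bar\partial\Bigr)\tr h=-2\bigl|h^{-1/2}\partial_Hh\bigr|_H^2,
\]
and similarly for $\tr h^{-1}$, which gives the uniform bounds on $h^{\pm1}$. Integrate this identity using $\ddbar\omega^{n-1}=0$ and the bound $h\le C\,\Id_E$. This yields $\int_0^\infty\norm{\partial_Hh}_{L^2}^2dt\le C\int_X\tr h(0)\,dV_\omega<\infty$. Along both flows one also has $\int_0^\infty\norm{\bar\partial\Psi}_{L^2}^2dt<\infty$. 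Cauchy--Schwarz in time then bounds the time integral of both terms by $C\bigl(\int_t^{t+1}\norm{\partial_Hh}_{L^2}^2ds\bigr)^{1/2}\bigl(1+\int_t^{t+1}\norm{\bar\partial S}_{L^2}^2ds\bigr)^{1/2}\to0$. Hence $\int_t^{t+1}\bigl|\norm{\Psi_{\tilde H(s)}}^2-\norm{\Psi_{H(s)}}^2\bigr|ds\to0$, and monotonicity forces $m(H_0)=m(\tilde H_0)$. Taking $\tilde H(0)=H_j$ with $j$ fixed, so the $j$-dependent constants are harmless, completes your proof, and the fallback you mention is unnecessary.
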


\medskip

On the fixed Hermitian bundle $(E,H_0)$,  we write $\mathcal{A}_{H_0}$ for the space of connections compatible with $H_{0}$, and $\mathcal{A}_{H_0}^{1,1}$ for the space of $H_0$-unitary integrable connections. The Hermitian-Yang-Mills flow (\ref{HYM-flow}) with initial metric $H_{0}$ is gauge equivalent to the following modified Yang-Mills flow with initial connection ${A}_{0}=D_{H_{0}}\in \mathcal{A}_{H_0}^{1,1}$:
\begin{equation}
 \left\{\begin{split}\label{modified flow}
\frac{\partial A(t)}{\partial t} &=-\sqrt{-1}(\partial_{A(t)}-\bar{\partial}_{A(t)})\Lambda_{\omega}F_{A(t)},\\
A(0)&=A_0.
 \end{split}\right.
 \end{equation}
When $\omega$ is a K\"ahler metric, the above flow (\ref{modified flow}) is just the Yang-Mills flow which was introduced by  Atiyah and Bott in \cite{AB1983}.  Nie and the second author (\cite{NZ2020}) studied the limiting behavior of the  modified Yang-Mills flow (\ref{modified flow}) under the assumption that the Hermitian metric $\omega $  is Gauduchon and astheno-K\"ahler. They proved that, for every sequence $t_k\to \infty$, there is a subsequence $t_{k_j}\to \infty$ and a closed subset $\Sigma_{an}\subset X$ with Hausdorff codimension at least $4$ such that  $A(t_{k_j})|_{X\setminus \Sigma_{an}}$ converges (locally modulo unitary gauge transformations) to a connection $A_{\infty}$ satisfying
\begin{equation}\label{eq17}
 D_{A_{\infty}}\Lambda_{\omega } F_{A_{\infty}}=0.
\end{equation}
The Atiyah-Bott-Bando-Siu question was first raised by Atiyah and Bott (\cite{AB1983}) for Riemann surfaces. For reflexive sheaves on compact K\"ahler manifolds, Bando and Siu (\cite{BS1994}) asked whether the limiting reflexive sheaf is isomorphic to the double dual of the Harder-Narasimhan-Seshadri graded sheaf of the initial sheaf. This question is solved for K\"ahler manifolds (\cite{Das1992, DW2004, Jac2016, Sib2015, LZZ2018}).  In \cite{CLZZ2026}, the authors further investigate the structure of the Uhlenbeck limit $A_{\infty}$, and give an affirmative answer to the  Atiyah-Bott-Bando-Siu question in the non-K\"ahler case.

 \begin{theorem}[{\cite[Theorem 1.9]{CLZZ2026}}]\label{ABBS thm}
 Let $(E, H_{0})$ be a Hermitian vector bundle on a compact Gauduchon and astheno-K\"ahler manifold $(X,\omega)$, and $A(t)$ be a global smooth solution of the modified Yang-Mills flow \eqref{modified flow} with smooth initial data $A_{0}\in \mathcal A_{H_0}^{1,1}$. Then the extended reflexive sheaf of any Uhlenbeck limit $(E_\infty, \bar{\partial}_{A_\infty})$ is isomorphic to the double dual of the graded sheaf associated to the Harder-Narasimhan-Seshadri filtration of the holomorphic vector bundle $(E, \bar{\partial}_{A_0})$ with respect to $\omega$, that is,
 \begin{equation}\label{HNS}
 (E_{\infty}, \bar{\partial}_{A_\infty})\cong \Gr_\omega^{\HNS}(E, \bar{\partial}_{A_0})^{\ast\ast}.
 \end{equation}
  \end{theorem}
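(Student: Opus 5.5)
The plan follows the strategy used in the K\"ahler case (Daskalopoulos--Wentworth, Sibley, Jacob, Li--Zhang--Zhang). Each place where K\"ahlerness was used is replaced by the Gauduchon and astheno-K\"ahler conditions together with the eigenvalue convergence of \cref{thm3}.

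\emph{Step 1: structure of the Uhlenbeck limit.} By Nie--Zhang, along a subsequence $t_{k_j}$ we obtain $A_\infty$ on $X\setminus\Sigma_{an}$ with $D_{A_\infty}\Lambda_\omega F_{A_\infty}=0$. Hence $\ii\Lambda_\omega F_{A_\infty}$ is parallel and has constant eigenvalues. Its eigenspaces give an $A_\infty$-parallel orthogonal splitting $E_\infty=\bigoplus_\alpha E_{\infty,\alpha}$ into holomorphic subbundles, and each summand is Hermitian--Einstein with curvature in $L^2$. The astheno-K\"ahler condition gives the monotonicity needed to control the Yang--Mills energy, so a Bando--Siu type extension theorem applies. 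Consequently each $E_{\infty,\alpha}$ extends to a reflexive sheaf carrying an admissible Hermitian--Einstein metric, and is therefore $\omega$-polystable. The Gauduchon condition lets us compute its degree from the admissible metric by Chern--Weil. Next we identify the constant eigenvalues. Since the flow is gauge equivalent to \eqref{HYM-flow}, \cref{thm3} passes to the limit and shows that the eigenvalue vector of $\ii\Lambda_\omega F_{A_\infty}$ equals $\frac{2\pi}{V_\omega}\vec\mu_\omega(E)$. Thus the ranks and slopes of the summands $E_{\infty,\alpha}$ coincide with the Harder--Narasimhan type $(r_\alpha,\mu_\alpha)$.

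\emph{Step 2: nonzero holomorphic maps from the filtration to the limit.} We argue by induction on the length of the Harder--Narasimhan--Seshadri filtration. Take the first stable subsheaf $\mathcal S\subset\mathcal E_1\subset E$. Let $g_j$ be the complex gauge transformations relating $\bar\partial_{A_0}$ to $\bar\partial_{A(t_{k_j})}$, and form the maps $\mathcal S\hookrightarrow E\xrightarrow{g_j}E$, normalized in $L^2$. We bound them using the mean value inequality for holomorphic sections on the flow, which is where $\sup|\Psi|$ in \eqref{meancurvature-bdd} enters. After passing to a further subsequence, they converge on compact sets of $X\setminus(\Sigma_{an}\cup \mathrm{Sing})$ to a nonzero holomorphic map $f:\mathcal S\to E_\infty$, which extends across the singular sets by reflexivity.

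\emph{Step 3: identifying the image.} Stability of $\mathcal S$ and polystability of the $E_{\infty,\alpha}$ force $f$ to land in the summand of slope $\mu_1$. They also force $f$ to be injective, with image isomorphic (after saturation and double dual) to a stable summand. To obtain equality rather than just $\mu(\mathcal S)\le\mu_1$, we use the $L^2$ convergence of the Harder--Narasimhan projections, from \cref{theorem1} and its flow analogue. This convergence shows that the second fundamental form of $\mathcal S$ with respect to $H(t_{k_j})$ tends to $0$ in $L^2$, so the induced metrics converge in the sense of degrees. We then split off this summand: pass to $E/\mathcal S$ with the quotient flow and its Uhlenbeck limit $E_\infty\ominus f(\mathcal S)$, and iterate. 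This yields $E_\infty^{**}\cong\Gr^{\HNS}_\omega(E)^{**}$.

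The main obstacle is Step 2, and specifically the claim that the limit map $f$ is nonzero and has controlled behaviour near $\Sigma_{an}$ and near the singular set of the filtration. In the non-K\"ahler setting the usual Bochner and monotonicity arguments carry torsion terms. I would try first to control these terms by using $\ddbar\omega^{n-1}=\ddbar\omega^{n-2}=0$ in the integration by parts, combined with the uniform bound on $\Psi_{H(t)}$.
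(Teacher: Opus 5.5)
Note first that the paper gives no proof of this statement. It is quoted from \cite{CLZZ2026} and used as a black box, so your outline has to stand on its own.

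Step~1 is fine, granted the reflexive extension that the statement presupposes. Two of your concerns are not real obstacles.
\begin{itemize}
\item \emph{No flow analogue of \cref{theorem1} is needed.} That theorem concerns the continuity path \eqref{eq}, not the flow. Let $\mathcal S'$ be any term of the Harder--Narasimhan--Seshadri filtration, and let $\pi_j$ be the $H(t_{k_j})$-orthogonal projection onto it. Ky Fan's inequality gives $\tr(\pi_j\Psi_{H(t_{k_j})})\le\sum_{a\le\rk\mathcal S'}\lambda_a$ pointwise. For such $\mathcal S'$ one has $\deg\mathcal S'=\sum_{a\le\rk\mathcal S'}\widehat\mu_{a,\omega}$. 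So the degree formula and \cref{thm3} give $\norm{\bar\partial\pi_j}_{L^2}\to0$.
\item \emph{The slope remark is unnecessary.} $\mu(\mathcal S)=\mu_1$ holds by your choice of $\mathcal S$, so there is no inequality to upgrade.
\item \emph{The first map in Step~2 is not where the difficulty lies.} Apply the mean value inequality to $|g_js|^2_{H_0}$ for local holomorphic generators $s$ of $\mathcal S$. These are holomorphic sections of $(E,\bar\partial_{A(t_{k_j})})$, so only \eqref{meancurvature-bdd} enters, and no metric on the singular sheaf $\mathcal S$ is needed. The bound holds on all of $X$, so no $L^2$-mass is lost at $\operatorname{Sing}(\mathcal S)\cup\Sigma_{an}$. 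The torsion of $\omega$ only adds first-order terms, which Moser iteration tolerates.
\end{itemize}

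The genuine gap is the iteration in Step~3. There is no ``quotient flow'': $E/\mathcal S$ is only torsion-free, and the metrics that $H(t)$ induces on it do not solve \eqref{HYM-flow}. More seriously, by Gauss--Codazzi the mean curvature of a quotient metric is $\pi_j^\perp\Psi\pi_j^\perp$ plus a nonnegative term of size $|\bar\partial\pi_j|^2$. You control this term only in $L^1$, and it is unbounded near $\operatorname{Sing}(\mathcal S)$. So the upper bound on the target's mean curvature, which your mean value inequality needs for maps from $\mathcal S_2/\mathcal S_1$ into $E/\mathcal S_1$ with the quotient metrics, is lost. The naive alternative, normalizing $g_j|_{\mathcal S_2}$ inside $E$, also fails: the limit may be carried entirely by $f(\mathcal S_1)$ and say nothing about $\mathcal S_2/\mathcal S_1$. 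Thus the identification of every graded piece after the first is missing, and that identification is the core of the theorem.

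One way to close the gap is the following. Write $0=\mathcal S_0\subset\cdots\subset\mathcal S_N=E$ for the full filtration.
\begin{enumerate}
\item The decay of $\bar\partial\pi_j$ gives, after gauge, a parallel limiting filtration $\mathcal F_i\subset E_\infty$ with $\rk\mathcal F_i=\rk\mathcal S_i$ and $\deg\mathcal F_i=\deg\mathcal S_i$.
\item To identify $\mathcal F_i/\mathcal F_{i-1}$, replace the subquotient by the fixed subsheaf
\[
\det\mathcal S_{i-1}\otimes(\mathcal S_i/\mathcal S_{i-1})\hookrightarrow\wedge^{k}E,\qquad k=\rk\mathcal S_{i-1}+1,\qquad \sigma\otimes s\mapsto\sigma\wedge s .
\]
\item Your first-step argument applied to $\wedge^k g_j$ now gives a nonzero limit map. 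It lands in $\det\mathcal F_{i-1}\otimes(\mathcal F_i/\mathcal F_{i-1})$ because the projections converge.
\item By induction $\det\mathcal F_{i-1}\cong\det\mathcal S_{i-1}$. Stability of the source and polystability of the target, with equal slope and rank, then give $\mathcal F_i/\mathcal F_{i-1}\cong(\mathcal S_i/\mathcal S_{i-1})^{**}$.
\end{enumerate}
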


Define
\begin{equation}\label{defect}
 \mathfrak D_\omega(E)
 =\Delta_\omega(E)
  +\frac{n-1}{nV_\omega} \mathcal{I}_\omega(E).
\end{equation}

\begin{proposition}\label{flow-defect}
Along the Hermitian-Yang-Mills flow \eqref{HYM-flow}, we have
\begin{equation}\label{mean-energy-limit}
 \lim_{t\to\infty}\norm{\Psi_{H(t)}^\perp}_{L^2(H(t),\omega)}^2
 =\frac{4\pi^2}{V_\omega}\mathcal{I}_\omega(E),
\end{equation}
and
\begin{equation}\label{primitive-energy-limit}
 \lim_{t\to\infty}
 \norm{(F_{H(t)}^\perp)_0}_{L^2(H(t),\omega)}^2
 =4\pi^2\mathfrak D_\omega(E).
\end{equation}
Consequently, \eqref{sharp-Bogomolov} holds, and equality holds if and only if
\begin{equation}\label{primitive-decay}
 \norm{(F_{H(t)}^\perp)_0}_{L^2(H(t),\omega)}\longrightarrow0.
\end{equation}
\end{proposition}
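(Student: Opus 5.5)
The plan is to deduce \eqref{mean-energy-limit} from the eigenvalue convergence of \cref{thm3}, and then obtain \eqref{primitive-energy-limit} from the Chern-Weil identity \eqref{CW-identity}.

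For the first limit, note that $\Psi_{H(t)}$ is $H(t)$-self-adjoint. Hence at each point
\[
|\Psi_{H(t)}^\perp|_{H(t)}^2=\sum_{i=1}^r\bigl(\lambda_i(H(t),\omega)-\bar\lambda\bigr)^2,
\qquad
\bar\lambda=\frac1r\sum_i\lambda_i=\frac1r\tr\Psi_{H(t)}.
\]
By \eqref{trace-fixed} and the normalization \eqref{trace0-normalization}, $\bar\lambda\equiv\frac{2\pi}{V_\omega}\mu_\omega(E)$ for all $t$. Set $v(t)=\vec\lambda(H(t),\omega)-\bar\lambda(1,\dots,1)$ and $w=\frac{2\pi}{V_\omega}\bigl(\vec\mu_\omega(E)-\mu_\omega(E)(1,\dots,1)\bigr)$. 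Then $\norm{\Psi^\perp_{H(t)}}^2_{L^2}=\norm{v(t)}_{L^2}^2$, while $\norm{w}_{L^2}^2=V_\omega\cdot\frac{4\pi^2}{V_\omega^2}\sum_\alpha r_\alpha(\mu_\alpha-\mu_\omega(E))^2=\frac{4\pi^2}{V_\omega}\mathcal I_\omega(E)$. Since $v(t)-w=\vec\lambda(H(t),\omega)-\frac{2\pi}{V_\omega}\vec\mu_\omega(E)$, \cref{thm3} gives $\norm{v(t)-w}_{L^2}\to0$. The triangle inequality in $L^2$ then yields $\norm{v(t)}_{L^2}\to\norm{w}_{L^2}$, which is \eqref{mean-energy-limit}. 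The uniform bound \eqref{meancurvature-bdd} is not even needed for this step, though it guarantees that all quantities are finite.

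For the second limit, apply \eqref{CW-identity} with $H=H(t)$. This is valid because $\omega$ is astheno-K\"ahler, so $\Delta_\omega(E)$ does not depend on $t$. It gives
\[
\norm{(F^\perp_{H(t)})_0}^2_{L^2(H(t),\omega)}=4\pi^2\Delta_\omega(E)+\frac{n-1}{n}\norm{\Psi^\perp_{H(t)}}^2_{L^2(H(t),\omega)}.
\]
Letting $t\to\infty$ and using \eqref{mean-energy-limit}, the right side converges to $4\pi^2\Delta_\omega(E)+\frac{n-1}{n}\frac{4\pi^2}{V_\omega}\mathcal I_\omega(E)=4\pi^2\mathfrak D_\omega(E)$, by the definition \eqref{defect}. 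This proves \eqref{primitive-energy-limit}.

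The consequences follow at once. The left side of \eqref{primitive-energy-limit} is nonnegative for every $t$, so $\mathfrak D_\omega(E)\ge0$, which is exactly \eqref{sharp-Bogomolov}. Equality means $\mathfrak D_\omega(E)=0$, and by \eqref{primitive-energy-limit} this is equivalent to \eqref{primitive-decay}.

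The only point requiring care is the pointwise identification of $|\Psi^\perp|^2_{H(t)}$ with the sum of squared centered eigenvalues. This holds because $\Psi_{H(t)}$ is $H(t)$-Hermitian and so can be diagonalized in an $H(t)$-unitary frame. One must also check that the centering constant is genuinely constant in $t$ and in $x$, which is exactly what \eqref{trace-fixed} provides. Beyond that, the argument is a direct combination of \cref{thm3} and \eqref{CW-identity}.
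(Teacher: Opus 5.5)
Your proposal is correct and follows essentially the same route as the paper: the pointwise eigenvalue identity for $|\Psi^\perp_{H(t)}|^2$ with the constant centering $\overline\lambda=\frac{2\pi}{V_\omega}\mu_\omega(E)$, then \cref{thm3}, then substitution into \eqref{CW-identity}. The one difference is minor, and your version is right. The paper invokes the uniform bound \eqref{meancurvature-bdd} when passing to the limit. As you observe, the $L^2$ convergence in \eqref{eigenvalue-convergence} together with the triangle inequality already gives the limit of the norms, so that bound is not needed there.
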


\begin{proof}
At each point, there is
\begin{equation}
 |\Psi_{H(t)}^\perp|^2
 =\sum_{a=1}^r\bigl(\lambda_a(H(t), \omega)-\overline\lambda (H(t), \omega )\bigr)^2,
\end{equation}
where
\begin{equation}
\overline\lambda (H(t), \omega )=\frac1r\sum_{a=1}^r\lambda_a(H(t), \omega)=\frac1r\sum_{a=1}^r\lambda_a(H(0), \omega)=\frac{2\pi}{V_\omega}\mu_\omega(E).
\end{equation}
The HN slopes satisfy
\begin{equation}
 \frac1r\sum_{a=1}^r\widehat\mu_{a, \omega}=\mu_\omega(E).
\end{equation}
Combining \eqref{eigenvalue-convergence} and the uniform bound \eqref{meancurvature-bdd}, we derive
\begin{equation}
 \begin{split}
 \lim_{t\to\infty}\int_X|\Psi_{H(t)}^\perp|^2dV_\omega
 &=V_\omega\sum_{a=1}^r
   \left(\frac{2\pi}{V_\omega}\right)^2
   \bigl(\widehat\mu_{a, \omega}-\mu_\omega(E)\bigr)^2\\
 &=\frac{4\pi^2}{V_\omega}
   \sum_{\alpha=1}^{l}r_\alpha
   \bigl(\mu_\alpha-\mu_\omega(E)\bigr)^2.
   \end{split}
\end{equation}
This is \eqref{mean-energy-limit}.  Substituting it into \eqref{CW-identity} proves \eqref{primitive-energy-limit}.  Since the left-hand side of \eqref{primitive-energy-limit} is nonnegative, the sharp inequality follows, as does the equality criterion.
\end{proof}

The equality condition gives the following local energy estimate.

\begin{lemma}[Local energy estimate]\label{local-energy lem}
Assume that equality holds in \eqref{sharp-Bogomolov}.  Let $A(t)$ be the solution of the modified Yang-Mills flow (\ref{modified flow}), and set
\begin{equation}
 \delta(T)=\sup_{s\geq T}\int_X\big|\bigl(F_{A(s)}^\perp\bigr)_0\big|_{H_{0}}^2dV_\omega.
\end{equation}
Then $\delta(T)\to0$ as $T\to\infty$.  Moreover, for every sufficiently small $R>0$ and every parabolic cylinder
$P_R(x,t):=B_R(x)\times[t-R^2,t+R^2]$ contained in $X\times[T,\infty)$,
\begin{equation}\label{local-energy-estimate}
 R^{2-2n}\int_{P_R(x,t)}|F_{A(s)}|_{H_{0}}^2dV_\omega ds
 \leq C R^4+C R^{4-2n}\delta(T),
\end{equation}
where $C$ is independent of $x$, $t$, $T$ and $R$.
\end{lemma}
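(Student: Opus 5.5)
The first claim, $\delta(T)\to0$, follows from \cref{flow-defect}. The modified Yang-Mills flow \eqref{modified flow} is gauge equivalent to the Hermitian-Yang-Mills flow \eqref{HYM-flow}: $A(s)=g(s)^*D_{H(s)}$ with $g(s)$ an $H_0$-unitary-to-$H(s)$ isometry. Hence $|(F_{A(s)}^\perp)_0|_{H_0}=|(F_{H(s)}^\perp)_0|_{H(s)}$ pointwise. Under equality the $L^2$ norm of this quantity tends to $0$ by \eqref{primitive-decay}, so its supremum over $s\ge T$ tends to $0$.

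For the local estimate I would split the curvature pointwise as
\[
F_{A}=(F_A^\perp)_0+\frac1n\Lambda_\omega F_A^\perp\,\omega+\frac1r\tr F_A\otimes\Id_E .
\]
The trace part is fixed along the flow by \eqref{trace-fixed}, namely $\tr F_{A(s)}=\tr F_{H_0}$, so it is a smooth bounded form. The middle term is controlled by $|\Lambda_\omega F_{A(s)}|_{H_0}=|\Psi_{H(s)}|_{H(s)}$, which is uniformly bounded by \eqref{meancurvature-bdd}. Consequently
\[
|F_{A(s)}|^2_{H_0}\le C+2|(F^\perp_{A(s)})_0|_{H_0}^2
\]
pointwise. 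Integrating over $P_R(x,t)$ gives two contributions. The bounded part contributes at most $C\,\mathrm{vol}(B_R)\cdot 2R^2\le CR^{2n+2}$. The primitive part contributes at most $2\int_{t-R^2}^{t+R^2}\int_X|(F^\perp_{A(s)})_0|^2\,dV_\omega\,ds\le 4R^2\delta(T)$, using $t-R^2\ge T$.

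Multiplying by $R^{2-2n}$ gives
\[
R^{2-2n}\int_{P_R}|F_{A(s)}|^2\le CR^4+CR^{4-2n}\delta(T),
\]
which is exactly \eqref{local-energy-estimate}. The constant depends only on $\sup|\Psi_{H(t)}|$, on $\tr F_{H_0}$, $r$, $n$, and on uniform volume bounds for small geodesic balls in compact $X$. None of these depend on $x,t,T,R$.

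The only delicate point is the gauge invariance of the pointwise norms. This requires checking that the gauge transformation intertwines the Chern connection of $H(s)$ with $A(s)$ unitarily. Because of that, both the primitive trace-free part and $\Lambda_\omega F$ transform by conjugation, and $\tr F$ is gauge invariant. Once this is in place the rest is bookkeeping; no monotonicity formula is needed, since the crude spatial integral over all of $X$ already yields the stated bound.
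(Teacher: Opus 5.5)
Your proposal is correct and follows essentially the same route as the paper: the gauge identity $|(F_{A(s)}^\perp)_0|_{H_0}=|(F_{H(s)}^\perp)_0|_{H(s)}$ together with \eqref{primitive-decay} gives $\delta(T)\to0$. The splitting of $F_{A(s)}$ into its primitive trace-free part, its mean-curvature part bounded by \eqref{meancurvature-bdd}, and its fixed trace part from \eqref{trace-fixed} then gives a pointwise bound whose integral over $P_R(x,t)$ is at most $CR^{2n+2}+CR^2\delta(T)$, exactly as in the paper's proof.
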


\begin{proof}
Since $\big|\bigl(F_{A(s)}^\perp\bigr)_0\big|_{H_{0}}^2=\big|\bigl(F_{H(s)}^\perp\bigr)_0\big|_{H(s)}^2$, the first assertion comes from \eqref{primitive-decay}.  By \eqref{primitive-decomposition} and \eqref{trace-fixed}, one has
\begin{equation}\label{curvature-decomposition}
 F_{A(t)}
 =\bigl(F_{A(t)}^\perp\bigr)_0-\frac{\ii}{n}\Psi_{A(t)}^\perp\omega
  +\frac1r\tr F_{A(0)}\otimes\Id_E.
\end{equation}
The last term is fixed and smooth, while \eqref{meancurvature-bdd} uniformly bounds the middle term.  Hence
\begin{equation}\label{pointwise-curvature-bdd}
 |F_{A(t)}|_{H_{0}}^2\leq C\big|\bigl(F_{A(t)}^\perp\bigr)_0\big|_{H_{0}}^2+C.
\end{equation}
The time interval of $P_R$ has length $2R^2$, and its spatial volume is bounded by $CR^{2n}$. Integrating
\eqref{pointwise-curvature-bdd} therefore yields
\begin{equation}
 \int_{P_R(x,t)}|F_{A(s)}|_{H_{0}}^2dV_\omega ds
 \leq CR^2\delta(T)+CR^{2n+2}.
\end{equation}
Multiplication by $R^{2-2n}$ means \eqref{local-energy-estimate}.
\end{proof}

\begin{proposition}\label{no-bubbling}
Assume that equality holds in \eqref{sharp-Bogomolov}. Then the analytic bubbling set $\Sigma_{an}$ of every Uhlenbeck sequence $A(t_j)$, $t_j\to\infty$, is empty.  After passing to a subsequence and applying unitary gauges, the convergence to $A_\infty$ is smooth on all of $X$.
\end{proposition}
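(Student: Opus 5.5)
The plan is to use \cref{local-energy lem} as the input to an $\varepsilon$-regularity theorem for the modified Yang-Mills flow \eqref{modified flow}. In the Gauduchon, astheno-K\"ahler setting, Nie and the second author (\cite{NZ2020}) proved such a theorem: there exist $\varepsilon_0>0$ and $R_0>0$ with the following property. If
\[
R^{2-2n}\int_{P_R(x,t)}|F_{A(s)}|_{H_0}^2\,dV_\omega\,ds<\varepsilon_0
\]
for some $R\le R_0$, then $\sup_{P_{R/2}(x,t)}|F_{A}|_{H_0}\le C R^{-2}$. The analytic bubbling set $\Sigma_{an}$ of a sequence $A(t_j)$ is exactly the set of points where this scale-invariant energy fails to become small, for every $R$, along the sequence. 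So it suffices to show that the right-hand side of \eqref{local-energy-estimate} can be made smaller than $\varepsilon_0$ uniformly in $x$.

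\textbf{Choosing the scale.} Pick $R\le R_0$ with $CR^4<\varepsilon_0/2$. For this fixed $R$, \cref{local-energy lem} gives $\delta(T)\to0$, so we can choose $T$ with $CR^{4-2n}\delta(T)<\varepsilon_0/2$. Then for every $x\in X$ and every $t\ge T+R^2$ we have $P_R(x,t)\subset X\times[T,\infty)$, and the scaled energy is below $\varepsilon_0$. The order of choices matters: $R$ first, then $T$ depending on $R$. This order is what defeats the bad factor $R^{4-2n}$, since that factor is fixed once $R$ is fixed.

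\textbf{Uniform bounds and convergence.} The $\varepsilon$-regularity theorem then gives $\sup_X|F_{A(t)}|_{H_0}\le C'$ for all $t\ge T+R^2$. From here the standard parabolic bootstrap applies. I would use Bando-type higher-derivative estimates for Yang-Mills-type flows: a curvature bound on a parabolic cylinder yields bounds on $\nabla_A^kF_A$ on a smaller cylinder, by differentiating the flow equation, applying maximum-principle or Moser estimates, and using the uniform bound \eqref{meancurvature-bdd}. The torsion terms of $\omega$ enter only as lower-order terms with smooth coefficients.

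With uniform $C^k$ bounds on the curvature over all of $X$, Uhlenbeck's gauge-fixing and compactness theorem (for example via Coulomb gauges on small balls, patched together as in Donaldson-Kronheimer) gives a subsequence and global unitary gauges $g_j$ with $g_j(A(t_{j}))\to A_\infty$ smoothly on $X$. No point has energy concentration, so $\Sigma_{an}=\emptyset$.

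\textbf{Main obstacle.} I expect the hardest step to be checking that the $\varepsilon$-regularity theorem and the monotonicity-type quantity used in \cite{NZ2020} are stated with exactly the scale-invariant normalization $R^{2-2n}\int_{P_R}$ in this non-K\"ahler flow. There the monotonicity formula carries error terms from $d\omega$; the astheno-K\"ahler and Gauduchon conditions are what control them, and this is presumably the source of the harmless additive term $CR^4$. If their statement instead uses a Hamilton/Chen-Struwe weighted energy, I would first bound that weighted energy by the cylinder integrals. To do this I would split it into a near part, controlled by \eqref{local-energy-estimate}, and a far part, controlled by the total energy bound and $\delta(T)$. After that bound, I would apply the same choice of $R$ and then $T$ as above.
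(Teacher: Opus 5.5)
Your proposal is correct and follows essentially the same route as the paper. You fix $R$ with $CR^4<\varepsilon_0/2$, then take $T$ with $CR^{4-2n}\delta(T)<\varepsilon_0/2$, and apply the small-energy regularity theorem of \cite{NZ2020} to get a uniform curvature bound for all large $t$, concluding via interior parabolic estimates and gauge fixing; your contingency about weighted energies is not needed, since the paper applies \cite[Theorem 2.10]{NZ2020} directly in the cylinder normalization of \eqref{local-energy-estimate}.
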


\begin{proof}
Let $\varepsilon_0$ be the constant in the parabolic small energy regularity theorem (\cite[Theorem 2.10]{NZ2020}).
First choose $R>0$ sufficiently small that $R<\operatorname{inj}(X)/2$ and $CR^4<\varepsilon_0/2$.  With this
$R$ fixed, choose $T\ge 4R^2$ so large that $CR^{4-2n}\delta(T)<\varepsilon_0/2$. Then the small energy regularity theorem yields a uniform curvature bound on $X$ for all sufficiently large $t$. Consequently,  there exist $\rho_0>0$ and $C>0$ such that, for all $x\in X$, all sufficiently large $j$, and all $0< \rho< \rho_0$, we have
\begin{equation}
 \rho^{4-2n}\int_{B_\rho(x)}|F_{A(t_j)}|^2dV_\omega\leq C\rho^{4}.
\end{equation}
Hence $\Sigma_{an}=\emptyset$. Interior parabolic estimates and gauge fixing then imply smooth subsequential convergence on all of $X$. More precisely, after passing to a subsequence, there are smooth unitary bundle isomorphisms $u_j: E_\infty \to E$ for which $u_j^*A(t_j)\to A_\infty$ in $C^\infty(X)$. In particular, $E_\infty$ and $E$ are isomorphic as smooth complex vector bundles.
\end{proof}

\begin{lemma}\label{local freeness}
Let $X$ be a complex manifold and $E$ a holomorphic vector bundle. Let
\begin{equation}
 0=\mathcal{S}_0\subset \mathcal{S}_1\subset\cdots\subset \mathcal{S}_k=E
\end{equation}
be a filtration by saturated subsheaves. Set $Q_j=\mathcal{S}_j/\mathcal{S}_{j-1}$. Suppose that $Q_j^{**}$ is locally free and
\begin{equation}
\codim_X\supp(Q_j^{**}/Q_j)\geq 3
\end{equation}
for every $j$. Then $Q_j=Q_j^{**}$ for every $j$, and all $\mathcal{S}_j$, $Q_j$ and $E/\mathcal{S}_j$ are locally free. In particular, the filtration is by holomorphic subbundles.
\end{lemma}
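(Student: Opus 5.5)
Our plan is to argue by induction on $j$, proving simultaneously that $\mathcal{S}_j$ is locally free, $Q_j=Q_j^{**}$, and $E/\mathcal{S}_j$ is locally free. The key tool is the following standard fact. Let $0\to \mathcal{A}\to \mathcal{B}\to \mathcal{C}\to 0$ be exact on a complex manifold, with $\mathcal{B}$ locally free and $\mathcal{C}$ torsion-free. If $\mathcal{C}^{**}$ is locally free and $\codim\supp(\mathcal{C}^{**}/\mathcal{C})\ge 3$, then $\mathcal{C}=\mathcal{C}^{**}$. We take $\mathcal{C}$ to be the successive quotient, not the whole of $E/\mathcal{S}_{j-1}$.

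\textbf{Setting up the induction.} Suppose $\mathcal{S}_{j-1}$ is a subbundle, so $E':=E/\mathcal{S}_{j-1}$ is locally free. Then $Q_j=\mathcal{S}_j/\mathcal{S}_{j-1}\subset E'$ is a saturated subsheaf, because $\mathcal{S}_j$ is saturated in $E$. Hence $E'/Q_j=E/\mathcal{S}_j$ is torsion-free. Consider
\[
0\to Q_j\to E'\to E/\mathcal{S}_j\to 0 .
\]
We want to show that $Q_j$ is a subbundle. The difficulty is that $Q_j$ is the subsheaf, and the codimension hypothesis concerns $Q_j^{**}/Q_j$ rather than the quotient. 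So we argue with depth instead.

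\textbf{Local argument at a point $x\in Z_j:=\supp(Q_j^{**}/Q_j)$.} Work in the local ring $\mathcal{O}=\mathcal{O}_{X,x}$, which is regular of dimension $n$, and set $T=Q_j^{**}/Q_j$. This is a module with $\dim T\le n-3$, so $\operatorname{depth} T\le n-3$. From $0\to Q_j\to Q_j^{**}\to T\to 0$ with $Q_j^{**}$ free, the depth lemma gives $\operatorname{depth} Q_j=\operatorname{depth} T+1\le n-2$ whenever $T\neq0$. Auslander–Buchsbaum then gives $\operatorname{pd} Q_j\ge 2$.

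On the other hand, $E'/Q_j$ is torsion-free, so it embeds in a free module and has depth at least $1$. Hence $\operatorname{pd}(E'/Q_j)\le n-1$, and therefore $\operatorname{pd} Q_j\le n-2$. These two bounds are compatible, so this alone gives no contradiction.

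\textbf{Using the whole filtration.} The contradiction has to come from the global structure of the filtration. The top quotient $Q_k=E/\mathcal{S}_{k-1}$ is a quotient of a locally free sheaf. We therefore plan to use the hypothesis on all quotients at once, via the Chern class / $\mathcal{E}xt$ identity
\[
\mathcal{E}xt^q(Q^{**}/Q,\mathcal{O})=0\quad\text{for } q<3 .
\]
From this, $\mathcal{E}xt^1(Q_j,\mathcal{O})\cong \mathcal{E}xt^2(T,\mathcal{O})=0$ and $\mathcal{E}xt^2(Q_j,\mathcal{O})\cong\mathcal{E}xt^3(T,\mathcal{O})$.

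Dualizing $0\to \mathcal{S}_{j-1}\to\mathcal{S}_j\to Q_j\to0$ and arguing by descending induction from $j=k$ (where $Q_k$ is a quotient of $E$), we aim to show that each $\mathcal{S}_j$ is reflexive with vanishing $\mathcal{E}xt^1$. We would then conclude that $Q_j$ is reflexive, using that an extension of a reflexive sheaf by a locally free one with $\mathcal{E}xt^1(Q_j,\mathcal{O})=0$ behaves well.

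\textbf{Main obstacle.} Getting the contradiction $T=0$ is the hardest step. I expect the key is that $\mathcal{E}xt^1(Q_j,\mathcal{O})=0$, combined with the saturated embedding into the bundle $E'$, forces $Q_j$ to be locally free away from codimension $\ge3$. The remaining codimension-3 issue should then be handled by descending induction starting from $Q_k$, where the quotient of a bundle is controlled.

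Once each $Q_j$ is locally free, every extension of locally free sheaves is locally free. This gives that all $\mathcal{S}_j$ and $E/\mathcal{S}_j$ are locally free, and hence that the filtration is by holomorphic subbundles.
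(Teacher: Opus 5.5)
\textbf{There is a genuine gap.} Your proposal does not prove the lemma. The one nontrivial step, showing $T_j:=Q_j^{**}/Q_j=0$, is left under ``Main obstacle'' as an expectation, and some of the tools around it are wrong.

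First, the ``standard fact'' you start from is false. On $\mathbb{C}^3$ let $\mathcal{B}=\mathcal{O}^{\oplus3}\to\mathcal{O}$ be given by $(z_1,z_2,z_3)$, with image $\mathcal{C}=\mathfrak{m}_0$ and kernel $\mathcal{A}$. Then $\mathcal{C}$ is torsion-free, $\mathcal{C}^{**}=\mathcal{O}$, and $\mathcal{C}^{**}/\mathcal{C}=\mathbb{C}_0$ has codimension $3$, yet $\mathcal{C}\neq\mathcal{C}^{**}$. The fact becomes true if $\mathcal{A}$ is also locally free: then $0\to\mathcal{A}\to\mathcal{B}\to\mathcal{C}^{**}\to T\to0$ gives $\operatorname{pd}T\le2$, so $T$ has no associated primes of height $\ge3$.

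The same example breaks your descending induction. In the filtration $0\subset\mathcal{A}\subset\mathcal{O}^{\oplus3}$ the top quotient satisfies every hypothesis of the lemma. Only $Q_1=\mathcal{A}$, which is reflexive but not locally free, violates them. So the information that kills $T_k$ has to come from below, and an induction ``starting from $Q_k$, where the quotient of a bundle is controlled'' cannot start.

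Your ascending set-up has the opposite defect. Once $\mathcal{S}_{j-1}$ is a subbundle, $Q_j$ is saturated in the bundle $E/\mathcal{S}_{j-1}$, hence reflexive. So $Q_j=Q_j^{**}$ automatically, and your depth computation concerns a module $T$ that is already zero. What remains is local freeness of $E/\mathcal{S}_j$, and that depends on $Q_{j+1},\dots,Q_k$, which this induction hypothesis never sees.

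\textbf{What fills the step.} The paper cites the local argument of Sibley--Wentworth: for a saturated filtration of a bundle with $\Gr^{**}$ locally free, $\operatorname{Sing}(\bigoplus_j Q_j)$ is empty or of pure codimension two. Under the codimension-$3$ hypothesis it is therefore empty.

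Your $\mathcal{E}xt$ identities can also be made into a proof, but the induction must run upward on a cohomological invariant of $\mathcal{S}_j$, not on the subbundle property. Localize $\mathcal{O}_{X,x}$ at a minimal prime $\mathfrak p$ of $\supp\bigl(\bigoplus_iT_i\bigr)$. This gives a regular local ring $R$ of dimension $c\ge3$ over which each $T_i$ has finite length, and saturation, reflexivity and double duals all commute with this localization. The argument then runs as follows.
\begin{enumerate}
\item Since $Q_i^{**}$ is free, $\operatorname{Ext}^q(Q_i,R)\cong\operatorname{Ext}^{q+1}(T_i,R)$ for $q\ge1$, and this vanishes for $1\le q\le c-2$.
\item Ascending induction along $0\to\mathcal{S}_{i-1}\to\mathcal{S}_i\to Q_i\to0$, starting from $\mathcal{S}_0=0$, gives $\operatorname{Ext}^{c-2}(\mathcal{S}_{j-1},R)=0$.
\item The reflexive module $\mathcal{S}_j$ has depth at least $2$, so $\operatorname{Ext}^{c-1}(\mathcal{S}_j,R)=0$.
\item The exact sequence $\operatorname{Ext}^{c-2}(\mathcal{S}_{j-1},R)\to\operatorname{Ext}^{c-1}(Q_j,R)\to\operatorname{Ext}^{c-1}(\mathcal{S}_j,R)$ then gives $\operatorname{Ext}^c(T_j,R)\cong\operatorname{Ext}^{c-1}(Q_j,R)=0$.
\item This is impossible for an index $j$ with $(T_j)_{\mathfrak p}\neq0$, since a nonzero finite-length module has nonvanishing $\operatorname{Ext}^c(-,R)$. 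Equivalently, $T_j\cong H^1_{\mathfrak m}(Q_j)$ injects into $H^2_{\mathfrak m}(\mathcal{S}_{j-1})=0$.
\end{enumerate}
Only after all $T_j$ vanish does the subbundle property follow from the extension sequences.
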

\begin{proof}
Set $\mathcal{G}=\bigoplus_{j=1}^{k}Q_j$ and $Z=\operatorname{Sing}(\mathcal{G})$. Since $\mathcal{G}^{**}$ is locally free,
\begin{equation}
Z=\bigcup_{j=1}^{k}\operatorname{Supp}(Q_j^{**}/Q_j),
\end{equation}
then $\operatorname{codim}_X Z\geq 3$. The argument in the proof of \cite[Proposition 2.3]{SW2015} is local and applies on any complex manifold. It shows that $Z$, if nonempty, has pure codimension two. Thus we can conclude that $Z=\varnothing$. So every $Q_j$ is locally free and $Q_j=Q_j^{**}$. The local freeness of all $\mathcal{S}_j$ and $E/\mathcal{S}_j$ follows by induction from the corresponding short exact sequences.
\end{proof}

\begin{lemma}\label{chosen JH}
Let $E$ be an $\omega$-semistable holomorphic vector bundle on a compact Gauduchon manifold $(X,\omega)$. Suppose that $E$ admits a subbundle filtration with stable quotient bundles of slope $\mu_\omega(E)$. Let $0=\mathcal{S}_0\subset \mathcal{S}_1\subset\cdots\subset \mathcal{S}_k=E$ be any Jordan-H\"older filtration, by saturated subsheaves, whose stable factors have locally free reflexive hulls. Then this Jordan-H\"older filtration is also by holomorphic subbundles and all of its stable factors are locally free.
\end{lemma}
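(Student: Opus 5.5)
The plan is to argue by induction on $\rk E$, peeling off the first piece $\mathcal{S}_1$ of the Jordan–H\"older filtration and comparing it with the given subbundle filtration. Write the given filtration as $0=F_0\subset F_1\subset\cdots\subset F_m=E$, with $G_i=F_i/F_{i-1}$ stable locally free of slope $\mu:=\mu_\omega(E)$. The basic tool is a standard fact for torsion-free sheaves of the same slope, at least one stable, on a Gauduchon manifold. If $f:\mathcal S\to\mathcal G$ is a nonzero map from a stable $\mathcal S$ to a stable $\mathcal G$, then semistability forces $\mu(\operatorname{im} f)=\mu$. Stability of $\mathcal S$ then gives $\ker f=0$, and stability of $\mathcal G$ gives $\rk\operatorname{im}f=\rk\mathcal G$.

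\textbf{Step 1: locating $\mathcal S_1$.} Let $j$ be minimal with $\mathcal S_1\subset F_j$. Then the composite $\mathcal S_1\to F_j\to G_j$ is nonzero, so by the fact above it is injective with torsion cokernel. Its reflexive extension $\mathcal S_1^{**}\to G_j$ is injective between sheaves of equal rank. Its determinant is a nonzero holomorphic section of $\det G_j\otimes(\det\mathcal S_1^{**})^{-1}$, a line bundle of degree $0$. Since an effective nonzero divisor has positive $\omega$-degree (using $\ddbar\omega^{n-1}=0$), this section vanishes nowhere. Hence $\mathcal S_1^{**}\cong G_j$ and $\mathcal S_1^{**}$ is locally free.

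\textbf{Step 2: $\mathcal S_1$ is a subbundle.} Since $F_j$ is locally free, the inclusion $\mathcal S_1\hookrightarrow F_j$ extends by Hartogs to $\sigma:\mathcal S_1^{**}\to F_j$. The composite $\mathcal S_1^{**}\to F_j\to G_j$ is the isomorphism of Step 1, so $\sigma$ splits the projection. This gives $F_j=F_{j-1}\oplus\sigma(\mathcal S_1^{**})$, with $\sigma(\mathcal S_1^{**})$ a holomorphic subbundle of $F_j$, hence of $E$. It contains $\mathcal S_1$ with torsion quotient, so saturatedness of $\mathcal S_1$ in $E$ forces $\mathcal S_1=\sigma(\mathcal S_1^{**})$. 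Thus $\mathcal S_1$ is a subbundle, and $E':=E/\mathcal S_1$ is locally free.

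\textbf{Step 3: induction.} $E'$ is semistable of slope $\mu$. It has the subbundle filtration given by the images of the $F_i$. For $i<j$ these images are $F_i$ itself, by the splitting. For $i\ge j$ they are $F_i/\mathcal S_1$, whose successive quotients are the $G_i$ with $i\neq j$, all stable of slope $\mu$. The filtration $\mathcal S_i/\mathcal S_1$ of $E'$ is a Jordan–H\"older filtration by saturated subsheaves with the same stable factors $\mathcal S_i/\mathcal S_{i-1}$, $i\ge2$. By induction it is by subbundles with locally free factors, and pulling back to $E$ gives the claim.

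The main obstacle is Step 1, specifically showing $\mathcal S_1^{**}\cong G_j$: this is where the degree-zero determinant argument on the Gauduchon manifold is needed. The hypothesis that the factors have locally free reflexive hulls is then recovered rather than used. Alternatively, one could invoke it together with uniqueness of $\Gr^{\mathrm{JH}}(E)^{**}$ and \cref{local freeness}, but bounding the codimension of the singular set that way seems harder than the direct splitting argument.
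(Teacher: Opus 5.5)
Your proof is correct and follows the paper's argument: take the minimal $j$ with $\mathcal S_1\subset F_j$, show the nonzero map to the stable factor $G_j$ is an isomorphism, use the resulting splitting $F_j=F_{j-1}\oplus\mathcal S_1$ to see that $\mathcal S_1$ is a subbundle, and induct on $E/\mathcal S_1$ with the image filtration. The only difference is that the paper gets local freeness of $\mathcal S_1$ up front, from reflexivity of the saturated $\mathcal S_1$ together with the hypothesis on $\mathcal S_1^{**}$. Your determinant argument instead shows that this hypothesis is redundant, once you add the step you skipped: $f^{**}$ is an isomorphism off the codimension-$\ge 3$ singular set of $\mathcal S_1^{**}$, hence everywhere by reflexivity.
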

\begin{proof}
We argue by induction on $k$. Write the given subbundle filtration as $0=G_0\subset\cdots\subset G_m=E$, with stable quotient bundles $P_j=G_j/G_{j-1}$. Since $\mathcal S_1$ is saturated in $E$, it is reflexive and hence locally free by the hypothesis on $\mathcal S_1^{**}$.

Choose the smallest $j_0$ such that $\mathcal S_1\subset G_{j_0}$. The induced morphism $\mathcal S_1\to P_{j_0}$ is nonzero and therefore is an isomorphism, since both bundles are $\omega$-stable with slope $\mu_\omega(E)$. Consequently, $G_{j_0}= G_{j_0-1}\oplus \mathcal S_1$. So $\mathcal S_1$ is a holomorphic subbundle of $E$. Let $p: E\to E'=E/\mathcal{S}_1$ be the quotient map. The images $p(G_i)$ satisfy
\begin{equation}
p(G_{j_0})=p(G_{j_0-1}), \qquad p(G_i)/p(G_{i-1})\cong P_i \quad (i\ne j_0).
\end{equation}
Thus, after omitting the repeated term, they form a subbundle filtration of $E'$ with stable quotient bundles of slope $\mu_\omega(E')=\mu_\omega(E)$. In particular, $E'$ is $\omega$-semistable. The filtration $0\subset \mathcal{S}_2/\mathcal{S}_1\subset\cdots\subset \mathcal{S}_k/\mathcal{S}_1=E'$ is saturated and has the remaining stable factors $\mathcal{S}_i/\mathcal{S}_{i-1}$, whose reflexive hulls are locally free. Induction finishes the proof.
\end{proof}

We now prove the main theorem.

\begin{proof}[Proof of \cref{main thm}]
Choose a sequence $t_j\to\infty$.  By \cref{no-bubbling}, after passing to a subsequence and applying unitary gauges, $A(t_j)$ converges smoothly on $X$ to a connection $A_\infty$ on a smooth Hermitian bundle $E_\infty$.  The HNS identification \eqref{HNS} gives
\begin{equation}
 (E_\infty,\bar\partial_{A_\infty})
 \cong \bigoplus_{\alpha,\beta}\mathcal{Q}_{\alpha,\beta}^{**}.
\end{equation}
Each $\mathcal{Q}_{\alpha,\beta}^{**}$ is locally free, since it is a direct summand of a locally free sheaf. It is also $\omega$-stable with slope $\mu_\alpha$, as is $\mathcal{Q}_{\alpha,\beta}$.

Smooth convergence and \eqref{primitive-decay} imply
\begin{equation}\label{limit-primitive-zero}
 \bigl(F_{A_\infty}^\perp\bigr)_0=0.
\end{equation}
By (\ref{eq17}), the endomorphism $\Psi_{A_\infty}=\ii\Lambda_\omega F_{A_\infty}$ is parallel. Its distinct eigenvalues are $ \lambda_\alpha=2\pi\mu_\alpha/V_\omega$ with multiplicities $r_\alpha$ according to  \eqref{eigenvalue-convergence}. The parallel self-adjoint projections therefore give an orthogonal holomorphic decomposition
\begin{equation}
E_\infty= \bigoplus_{\alpha=1}^{l}B_\alpha, \qquad \sqrt{-1}\Lambda_{\omega }F_{A_\infty|_{B_\alpha}}=\lambda_\alpha\Id_{B_\alpha}.
\end{equation}
Each $B_\alpha$ is Hermitian-Einstein, hence $\omega$-semistable with slope $\mu_\alpha$. Uniqueness of the HN filtration implies
\begin{equation}
B_\alpha \cong \bigoplus_{\beta}\mathcal{Q}_{\alpha,\beta}^{**}.
\end{equation}

Let $\mathcal{S}\subset B_\alpha$ be the image of the first summand under this holomorphic isomorphism and $\pi_\mathcal{S}$ its orthogonal projection with respect to the limiting metric $H_\infty$. The subbundle degree formula gives
\begin{equation}
\begin{split}
\deg_\omega \mathcal{S}
&=\frac1{2\pi}\int_X\tr\bigl(\pi_\mathcal{S}(\ii\Lambda_\omega F_{H_\infty|_{B_\alpha}})\bigr)\,dV_\omega
  -\frac1{2\pi}\|\bar\partial_{B_\alpha}\pi_\mathcal{S}\|_{L^2}^2\\
&=\frac{\lambda_\alpha\rk(\mathcal{S})V_\omega}{2\pi}
  -\frac1{2\pi}\|\bar\partial_{B_\alpha}\pi_\mathcal{S}\|_{L^2}^2.
 \end{split}
\end{equation}
Since $\mu_\omega(\mathcal{S})=\mu_\alpha=\lambda_\alpha V_\omega/(2\pi)$, $\bar\partial_{B_\alpha}\pi_\mathcal{S}=0$. Hence $\mathcal{S}$ and $\mathcal{S}^\perp$ are holomorphic parallel subbundles. The quotient map restricts to a holomorphic isomorphism
\begin{equation}
\mathcal{S}^\perp\cong B_\alpha/\mathcal{S}
\cong\bigoplus_{\beta\geq2}\mathcal{Q}_{\alpha,\beta}^{**},
\end{equation}
and $\mathcal{S}^\perp$ remains Hermitian-Einstein with constant $\lambda_\alpha$. Repeating the argument deduces an orthogonal parallel decomposition with stable factors isomorphic to $\mathcal Q_{\alpha,\beta}^{**}$.

Set $\gamma=\frac{\ii}{r}\tr F_{A_\infty}$ and
\begin{equation}
 \overline\lambda
 =\frac{1}{r}\sum_\alpha r_\alpha\lambda_\alpha
 =\frac{2\pi}{V_\omega}\mu_\omega(E).
\end{equation}
Combining \eqref{primitive-decomposition} and \eqref{limit-primitive-zero}, we obtain on every stable summand
\begin{equation}\label{curvature-proof}
\begin{split}
\ii F_{\alpha,\beta}
&=\gamma\otimes\Id_{\mathcal Q_{\alpha,\beta}^{**}}
+\frac1n(\lambda_\alpha-\overline\lambda)
 \omega\otimes\Id_{\mathcal Q_{\alpha,\beta}^{**}}\\
&=\left(\gamma+\frac{2\pi}{nV_\omega}
\bigl(\mu_\alpha-\mu_\omega(E)\bigr)
\omega\right)\otimes\Id_{\mathcal Q_{\alpha,\beta}^{**}}.
\end{split}
\end{equation}
This is \eqref{curvature}.  Thus every stable summand is projectively flat.  The form $\gamma$ is real and closed by the Bianchi identity for the determinant connection. This proves \ref{(i)} and \ref{(ii)}.

Now we prove \ref{(v)}. The scalar $(1,1)$-form on the summand $Q_{\alpha,\beta}^{**}$ is
\begin{equation}
 \gamma_\alpha
 =\gamma+\frac{2\pi}{nV_\omega}
 \bigl(\mu_\alpha-\mu_\omega(E)\bigr)\omega.
\end{equation}
The Bianchi identity derives $d\gamma_\alpha=0$.  Since $d\gamma=0$,
\begin{equation}\label{bianchi-rigidity}
 \bigl(\mu_\alpha-\mu_\omega(E)\bigr)d\omega=0.
\end{equation}
 If $E$ is not semistable, at least one $\mu_\alpha$ differs from $\mu_\omega(E)$, so $d\omega=0$. Choose $\alpha$, $\eta$ with $\mu_\alpha>\mu_\eta$, and stable summands of the corresponding blocks.  Noting \eqref{curvature-proof}, one has
 \begin{equation}\label{rational-kahler-class}
  \frac{c_1(Q_{\alpha,\beta}^{**})}{\rk Q_{\alpha,\beta}^{**}}
  -\frac{c_1(Q_{\eta,\tau}^{**})}{\rk Q_{\eta,\tau}^{**}}
  =\frac{\mu_\alpha-\mu_\eta}{nV_\omega}[\omega]
  \quad\text{in }H^2_{\mathrm{dR}}(X,\mathbb R).
 \end{equation}
The left-hand side is a rational class and the coefficient on the right is positive.  Thus a positive multiple of $[\omega]$ is rational. From the Kodaira embedding theorem, we know $X$ is projective. This proves \ref{(v)}.

Next prove \ref{(iii)}. We distinguish two cases according to whether or not $E$ is $\omega$-semistable. First suppose that $E$ is not $\omega$-semistable. Then $\omega$ is K\"ahler due to \ref{(v)}. Set
\begin{equation}
 T_{\alpha,\beta}=Q_{\alpha,\beta}^{**}/Q_{\alpha,\beta}.
\end{equation}
The sheaf $T_{\alpha,\beta}$ is supported in codimension at least two. Let $Z_{\alpha,\beta}^{(2)}$
be its associated effective codimension-two cycle, with multiplicities defined as in the erratum to \cite{SW2015}. By additivity of the Chern character and \cite[Proposition 3.1]{SW2015}, we have
\begin{equation}
 \operatorname{ch}_2(Q_{\alpha,\beta}^{**})
 -\operatorname{ch}_2(Q_{\alpha,\beta})
=\operatorname{ch}_2(T_{\alpha,\beta})
=[Z_{\alpha,\beta}^{(2)}] \quad\text{in }H^4(X,\mathbb{Q}).
\end{equation}
\cref{no-bubbling} tells us that $E_\infty$ and $E$ are isomorphic as smooth complex vector bundles. Hence $\operatorname{ch}_2(E_\infty)=\operatorname{ch}_2(E)$.  On the other hand, from additivity of the Chern character in the HNS filtration, it follows that
\begin{equation}
 \operatorname{ch}_2(E)
 =\sum_{\alpha,\beta}\operatorname{ch}_2(Q_{\alpha,\beta}),
 \qquad
 \operatorname{ch}_2(E_\infty)
 =\sum_{\alpha,\beta}\operatorname{ch}_2(Q_{\alpha,\beta}^{**}).
\end{equation}
Then $0=\operatorname{ch}_2(E_\infty)-\operatorname{ch}_2(E)=\sum_{\alpha, \beta}[Z_{\alpha,\beta}^{(2)}]$. Now $\omega^{n-2}/(n-2)!$ is closed, so
\begin{equation}
0=\sum_{\alpha,\beta}\int_{Z_{\alpha,\beta}^{(2)}}\frac{\omega^{n-2}}{(n-2)!}.
\end{equation}
Positivity of $\omega$ and effectivity force each such cycle to vanish. Therefore every $T_{\alpha,\beta}$ is supported in codimension at least $3$. \cref{local freeness} then yields $T_{\alpha,\beta}=0$ for all $\alpha$, $\beta$, and shows that the filtration is by holomorphic subbundles. Now suppose that $E$ is $\omega$-semistable. Then $\mathcal{I}_\omega(E)=0$, and equality in \eqref{sharp-Bogomolov} gives $\Delta_\omega(E)=0$. According to \cite[Theorem 1.7]{PZZ2023}, it admits a filtration by holomorphic subbundles with stable quotient bundles of slope $\mu_\omega(E)$. Applying \cref{chosen JH} completes the semistable case and hence the proof of \ref{(iii)}.

It remains to prove \ref{(iv)}. For a rank $s$ bundle $\mathcal{F}$, write
\begin{equation}
\mathcal{D}(\mathcal{F}):=2c_2^{\operatorname{BC}}(\mathcal{F})-\frac{s-1}{s}c_1^{\operatorname{BC}}(\mathcal{F})^2
=\frac{c_1^{\operatorname{BC}}(\mathcal{F})^2}{s}-2\operatorname{ch}_2^{\operatorname{BC}}(\mathcal{F}).
\end{equation}
For its subbundle filtration with quotient bundles $\mathcal{G}_j$ of ranks $s_j$, set $\xi_j=c_1^{\operatorname{BC}}(\mathcal{G}_j)/s_j$, so $s=\sum_j s_j$. Due to the additivity of the Chern character in Bott-Chern cohomology,  one can see
\begin{equation}\label{chernnumber}
\mathcal{D}(\mathcal{F})=\sum_j\mathcal{D}(\mathcal{G}_j)-\frac1s\sum_{i<j}s_i s_j(\xi_i-\xi_j)^2.
\end{equation}
From \ref{(iii)}, the Jordan-H\"older filtration of each $\mathcal{Q}_\alpha$ is by subbundles. Its stable factors are projectively flat, hence $\mathcal{D}(\mathcal{Q}_{\alpha,\beta})=0$. By \eqref{curvature}, both terms on the right of \eqref{chernnumber} vanish for $\mathcal{F}=\mathcal{Q}_\alpha$. In particular, $\Delta_\omega(\mathcal{Q}_\alpha)=0$. This proves \ref{(i)}--\ref{(v)} are necessary for equality in \eqref{sharp-Bogomolov}.

Conversely, assume \ref{(i)}--\ref{(iii)}. Set $G_\alpha=\bigoplus_{\beta}\mathcal{Q}_{\alpha,\beta}$. Endow $G_\alpha$ with the restriction of the metric in \ref{(ii)} and let $F_{G_\alpha}$ be its Chern curvature. Then
\begin{equation}
 \zeta_\alpha:=\frac{\sqrt{-1}}{2\pi r_\alpha}\tr F_{G_\alpha}=\frac{\gamma}{2\pi}+\frac{\mu_\alpha-\mu_\omega(E)}{nV_\omega}\omega.
\end{equation}
These are closed real $(1,1)$-forms. \cref{(iii)} means that the Jordan-H\"older filtration of $\mathcal{Q}_\alpha$ is by holomorphic subbundles. Bott-Chern additivity therefore yields
\begin{equation}
c_1^{\operatorname{BC}}(\mathcal{Q}_\alpha)=c_1^{\operatorname{BC}}(G_\alpha)=r_\alpha[\zeta_\alpha]_{\operatorname{BC}}, \qquad \operatorname{ch}_2^{\operatorname{BC}}(\mathcal{Q}_\alpha)=\operatorname{ch}_2^{\operatorname{BC}}(G_\alpha)=\frac{r_\alpha}{2}[\zeta_\alpha^2]_{\operatorname{BC}}.
\end{equation}
Then $\mathcal{D}(\mathcal{Q}_\alpha)=0$ and
\begin{equation}
\mathcal{D}(E)=-\frac{1}{r}\sum_{\alpha<\eta}r_\alpha r_\eta[(\zeta_\alpha-\zeta_\eta)^2]_{\operatorname{BC}}.
\end{equation}
From $\zeta_\alpha-\zeta_\eta=(\mu_\alpha-\mu_\eta)\omega/(nV_\omega)$, it follows that
\begin{equation}
\begin{split}
\Delta_\omega(E)
&=-\frac{1}{rn^2V_\omega^2}\sum_{\alpha<\eta}r_\alpha r_\eta(\mu_\alpha-\mu_\eta)^2 \int_X\frac{\omega^n}{(n-2)!}\\
&=-\frac{n-1}{nV_\omega} \mathcal{I}_\omega(E).
\end{split}
\end{equation}
\end{proof}

\begin{proof}[Proof of \cref{nonkahler}]
If equality holds and $d\omega\neq0$, \cref{(v)} of \cref{main thm} implies that $E$ is $\omega$-semistable. Hence $\mathcal{I}_\omega(E)=0$ and then $\Delta_\omega(E)=0$. Conversely, semistability and $\Delta_\omega(E)=0$ immediately yield equality in \eqref{sharp-Bogomolov}.

Theorem 1.7 in \cite{PZZ2023} says that there is a filtration by holomorphic subbundles
\begin{equation}
 0=\mathcal{F}_0\subset \mathcal{F}_1\subset\cdots\subset \mathcal{F}_m=E
\end{equation}
whose quotient bundles $\mathcal{F}_j/\mathcal{F}_{j-1}$ are $\omega$-stable and satisfy
\begin{equation}
\Delta_\omega(\mathcal{F}_j/\mathcal{F}_{j-1})=0, \qquad \frac{c_1^{\operatorname{BC}}(\mathcal{F}_j/\mathcal{F}_{j-1})}{\rk(\mathcal{F}_j/\mathcal{F}_{j-1})}
 =\frac{c_1^{\operatorname{BC}}(E)}{r}  \quad\text{in }H^{1,1}_{\mathrm{BC}}(X,\mathbb R).
\end{equation}
Each stable quotient admits a Hermitian-Einstein metric (\cite{LY1987,LT2006}). For this metric, its trace-free curvature vanishes, so $\mathcal{F}_j/\mathcal{F}_{j-1}$ is projectively flat.
\end{proof}

\end{document}